\documentclass[a4paper, 12pt]{amsart}
\usepackage[T1]{fontenc}
\usepackage{amssymb,amsthm,amsmath}
\usepackage{xspace}
\usepackage{enumerate}
\usepackage{color}
\usepackage{mathtools}

\newcommand{\R}{\mathbb{R}}

\newcommand{\N}{\mathbb{N}}

\newcommand{\Y}{\mathbb{Y}}
\newcommand{\G}{\mathbb{G}}

\newcommand{\norm}[1]{\left\lVert#1\right\rVert}

\newtheorem{theorem}{Theorem}[section]
\newtheorem{lemma}[theorem]{Lemma}
\newtheorem{proposition}[theorem]{Proposition}
\newtheorem{corollary}[theorem]{Corollary}

\theoremstyle{remark}
\newtheorem{remark}[theorem]{Remark}

\begin{document}

\title[]
{Fluctuation limits for mean-field interacting nonlinear Hawkes processes}
\author{Sophie Heesen, Wilhelm Stannat}
\address{
Institut f\"ur Mathematik\\
Technische Universit\"at Berlin \\
Stra{\ss}e des 17. Juni 136\\
D-10623 Berlin\\
Institut f\"ur Mathematik\\
Technische Universit\"at Berlin \\
Stra{\ss}e des 17. Juni 136\\
D-10623 Berlin\\
and\\
Bernstein Center for Computational Neuroscience\\
Philippstr. 13\\
D-10115 Berlin\\
Germany}
\email{heesen.sophie@googlemail.com, stannat@math.tu-berlin.de}
\date{Berlin, May 7, 2021}

\begin{abstract} 
We investigate the asymptotic behaviour of networks of interacting non-linear Hawkes processes 
modeling a homogeneous population of neurons in the large population limit. In particular, we prove a 
functional central limit theorem for the mean spike-activity thereby characterizing the asymptotic fluctuations 
in terms of a stochastic Volterra integral equation. Our approach differs from previous approaches in making 
use of the associated resolvent in order to represent the fluctutations as Skorokhod continuous mappings of 
weakly converging martingales. Since the Lipschitz properties of the resolvent are explicit, our analysis in 
principle also allows to derive approximation errors in terms of driving martingales. We also discuss 
extensions of our results to multi-class systems. 
\end{abstract}

\keywords{Hawkes process, mean field limit, Skorokhod toplogy, stochastic Volterra integral equation}
\subjclass[2010]{60H35, 93E11, 60F99} 

\maketitle

\section{Introduction}

\noindent
The main purpose of this paper is to give a short proof of a functional central limit theorem for the 
fluctuations of a network of interacting Hawkes processes in the large population limit. Interacting 
Hawkes processes have been intensively studied in recent years as mathematical models for the statistical 
analysis of biological neural networks (cf. \cite{GalLoe, Chevallier1, Chevallier2, DelattreEtAl, 
DitLoe} and references therein). Results on their asymptotic behavior in the large population limit have 
been obtained in particular in the latter four references. Various functional laws of large numbers and 
(functional) central limit theorems (CLT henceforth) have been obtained therein in the respective 
settings. In particular, a CLT by simultaneously 
taking both, the number of neurons (or size of the network) and time to infinity has been obtained in 
\cite{DitLoe} and a functional CLT for age-dependent Hawkes processes in 
\cite{Chevallier2}. In contrast to the usual proof of functional CLTs via tightness properties, we will 
represent the fluctuation process as the unique solution of a stochastic Volterra integral equation 
driven by c\`adla\`g martingales that are shown to converge to a rescaled Brownian motion via the 
martingale CLT. This allows in addition to the corresponding result in \cite{Chevallier2} 
for an explicit representation of the limiting fluctuation process (see Remark \ref{RemarkCLT2} below 
for a precise comparison).   

Let us introduce our precise setting. Consider an $\mathbb{R}_{+}$-indexed filtered probability space 
$\left(\Omega,\mathcal{F},\mathbb{F},\mathbb{P}\right)$ and let $N_{i}$, $i\in\mathbb{N}$, be a sequence 
of iid $\mathbb{F}$-Poisson random measures with intensity measure $dzds$ on $\mathbb{R}_{+}
\times\mathbb{R}_{+}$. Let $f$ and $h$, modelling spike rate resp. synaptic weights, be 
two real-valued functions satisfying the following set of assumptions (A):  
\begin{enumerate}
    \item[(A.1)] $f\in C^{2}\left(\mathbb{R},\mathbb{R}_{+}\right)$, $f$ and $f'$ are Lipschitz 
                 continuous,                 
    \item[(A.2)] $h\in C^{1}\left(\mathbb{R}_{+},\mathbb{R}\right)$, $h(0)=0$
\end{enumerate}
Then, for each $N\in\mathbb{N}$, there exists a pathwise unique solution $Z^{N}=\left(Z^{N}_{1},\ldots , 
Z^{N}_{N}\right)$ of the integral equation 
\begin{align}\label{eq:HP}
     Z^{N}_{i}(t)&=\int\limits_{0}^{t}\int\limits_{0}^{\infty}1_{\left\{z\leq\lambda^{N}(s)\right\}}
     \;N_{i}\left(dz\times ds\right)\;,&& t\geq0\;,\;1\leq i\leq N\;,\\
    \lambda^{N}(t)&=f\left(\frac{1}{N}\sum\limits_{i=1}^{N}\int\limits_{0}^{t-}h(t-s)\;dZ^{N}_{i}(s)
    \right)\;,&& t>0\;,\label{eq:Def-small_Lambda}
\end{align}
(cf. \cite[Theorem 6]{DelattreEtAl}). Moreover, $Z^{N}$ is an interacting non-linear Hawkes process with 
parameters $\left(f,h,N\right)$ in the sense of \cite[Proposition 3(a), Definition 1]{DelattreEtAl}). In 
particular, $Z^{N}=\left(Z^{N}_{1},\ldots,Z^{N}_{N}\right)$ is a family of $\mathbb{F}$-counting 
processes satisfying 
\begin{equation} 
\label{eq:no-sim-jump}
    \mathbb{P}\left(\Delta Z^{N}_{i}(t)=1\;,\;\Delta Z^{N}_{j}(t)=1\right)=0 \;,  
    \quad i\ne j\;,\;t\geq0\;,
\end{equation}
with compensator (or cumulative intensity process) 
\begin{equation} 
\label{eq:Def-Lambda}
    \Lambda^{N}(t)=\int\limits_{0}^{t}\lambda^{N}(s)\;ds, \quad t\ge 0, 
\end{equation}
for each $1\leq i\leq N$. 

Delattre et. al. prove in \cite[Theorem 8]{DelattreEtAl} the following 
propagation of chaos result: for any $N\in\mathbb{N}$, there exists a family of iid Poisson processes 
$\bar{Z}_{i}$, $1\leq i\leq N$, with compensator $m$ being the unique solution of  
\begin{equation} 
\label{eq:Def-m}
    m(t)=\int\limits_{0}^{t}f\left(\int\limits_{0}^{s}h(s-u)\;dm(u)\right)\;ds\;,\quad t\ge 0, 
\end{equation}
such that for all $T > 0$ there exists some constant $C(T)$ 
\begin{equation} 
\label{eq:L1-Cvg-HP}
\mathbb{E}\left\lbrack\norm{Z^{N}_{i}-\bar{Z}_{i}}_{T}\right\rbrack 
\le\frac{C(T)}{\sqrt{N}}, \quad 1\le i\leq N\; ,N\in\mathbb{N}. 
\end{equation} 
Here, $\|y\|_T := \sup_{t\in[0,T]} |f(t)|$ denotes the supremum norm of a function $y : [0,T]\to\R$.  
This particularly yields a first order approximation of the Hawkes process $Z^{N}$ with the help of 
$N$ iid Poisson processes, each with intensity 
\begin{equation} 
\label{eq:Def-gamma} 
\lambda(t)=\frac{dm}{dt} (t) =f\left(x_t\right)\;,\quad t\ge 0\;.
\end{equation} 
with 
\begin{equation} 
\label{eq:Def-small_x}
x_{t}=\int\limits_{0}^{t}h(t-s)\;dm(s)\;,\quad t\ge 0\;.
\end{equation}
As an immediate consequence (cf. Proposition \ref{LLN}) the following 
functional law of large numbers for interacting non-linear Hawkes processes holds: 
\begin{equation} 
\label{eq:LLN}
\norm{\frac{1}{N}\sum\limits_{i=1}^{N}\left(Z^{N}_{i}-m\right)}_{T}\xrightarrow[N\to\infty]{L^{1}}0\;,
\quad T>0\;.
\end{equation}

\medskip 
In this paper we are now interested in the corresponding asymptotic fluctuations 
\begin{equation} 
\label{eq:Def-Y}
Y^{N}(t)=\frac{1}{\sqrt{N}} \sum\limits_{i=1}^{N} 
\left(Z^{N}_{i}(t)-m(t)\right)\;, \quad t\ge 0\;,\;N\in\mathbb{N}\; .
\end{equation}
The limiting behaviour of \eqref{eq:Def-Y} is linked to the asymptotic 
behaviour of the associated intensity processes $\lambda^N$ that can be 
written as 
\begin{equation}
\lambda^{N}(t)=f\left(x_{t}+\frac{X^{N}(t)}{\sqrt{N}}\right)\;,\quad t\ge 0\;,\;N\in\mathbb{N}\;,
\end{equation}
with 
\begin{equation} 
\label{eq:Def-X}
X^{N}(t)=\int\limits_{0}^{t}h(t-s)\;dY^{N}(s) = 
\int_0^t h' (t-s) Y^N (s)\, ds \; ,  
\end{equation} 
where we used integration by parts and $h(0) = Y^N (0) = 0$. 
A Taylor expansion of $f$ at $x_t$ yields the linear approximation 
\begin{equation} 
\label{eq:intensity_fluc}
\sqrt{N}\left(\lambda^{N}(t)-\lambda(t)\right) 
\approx f' \left( x_t\right) X^{N}(t) 
= f' \left( x_t\right)\int_0^t h'(t-s) Y^N (s) \, ds ,   
\end{equation}
which is the key to our analysis. Our main results are contained in Section \ref{CLTs}. We will first 
prove a weak convergence result for $Y^N$ and identify its limit in Theorem \ref{CLT1}, subsequently 
prove the weak convergence of $X^N$ in Corollary \ref{CLT2} and finally the weak convergence of $\sqrt{N}
\left(\lambda^{N}(t)-\lambda(t)\right)$ in Theorem \ref{CLT3}. We also discuss in Subsection \ref{ExtensionsSystems} extensions of these convergence results to multi-class systems. Section \ref{Proof1} contains some 
preliminary results used in the analysis of Section \ref{CLTs}. Finally, in Section \ref{Conclusion}, we apply our results to a second order approximation 
of the Hawkes process $Z^{N}$ by a system of $N$ identically distributed Cox processes.

\section{Main Results}
\label{CLTs}

\noindent 
Let us first introduce the Skorokhod metric. To this end denote by $D\left(I,\mathbb{R}\right)$, where  
$I\subseteq\mathbb{R}_{+}$, the space of all c\`{a}dl\`{a}g functions mapping $I\rightarrow\mathbb{R}$. 
For any $T>0$ let $\Gamma_{T}$ denote the set of all strictly increasing and continuous bijections on 
$\left\lbrack0,T\right\rbrack$ and for $x,y\in D\left(\left\lbrack0,T\right\rbrack,\mathbb{R}\right)$ put 
\begin{equation}
    d_{S}^{T}(x,y)=\inf\limits_{\gamma\in\Gamma_{T}}\max\left\{\norm{\gamma-id}_{T},\;
    \norm{x-y\circ\gamma}_{T}\right\},
\end{equation}
where $\norm{\cdot}_{T}$ denotes the supremum-norm on $\left\lbrack0,T\right\rbrack$. It is well-known 
that $d^{T}_{S}$ is a metric on $D\left(\left\lbrack0,T\right\rbrack,\mathbb{R}\right)$ and that on 
$D\left(\mathbb{R}_{+},\mathbb{R}\right)$ there exists a metric $d_{S}$ such that 
$\left(D\left(\mathbb{R}_{+},\mathbb{R}\right),\;d_{S}\right)$ is a separable and complete metric 
space and convergence in $d_{S}$ is equivalent to convergence in $d_{S}^{T}$ for all $T>0$ 
(see Chapter 3.5 of \cite{EK}). All following convergence results refer to the weak convergence topology 
associated with the Skorokhod topology on $D\left(\mathbb{R}_{+},\mathbb{R}\right)$ induced by $d_{S}$. 

\medskip 
A crucial ingredient in our analysis will be the following inhomogeneous Volterra integral equation of the 
second kind 
\begin{equation}
\label{eq:Volterra_2nd_kind} 
G (t) = \int_0^t \kappa (t,s) G (s) \, ds + F (t) \, , 0\le s\le t, 
\end{equation} 
with integral kernel 
\begin{equation}
\label{eq:Def-kappa}
    \kappa(t,s)=\int\limits_{s}^{t}f'\left(x_{u}\right)h'(u-s)\;du\;, \qquad
    s\in\left\lbrack0,t\right\rbrack\;,\;t\geq0\;,
\end{equation}
where $x_{t}$ is as in \eqref{eq:Def-small_x}. Assumption (A) implies the upper bound 
\begin{equation} 
\label{eq:upper_bound_kappa} 
|\kappa (t,s) |\le \|f^\prime \|_\infty \|h^\prime \|_{L^1[0,t]}   
\le \|f^\prime \|_\infty \|h^\prime \|_{L^1 [0,T]} =: M_T 
\end{equation} 
for $0\le s\le t\le T$. It is then well-known, that for local integrable $F$, the convolution  
\begin{equation} 
\label{eq:Def-Phi} 
\Phi (F ) (t) := \int_0^t K(t,s) F (s)\, ds + F(t) 
\end{equation} 
is the unique solution of \eqref{eq:Volterra_2nd_kind} (see Lemma \ref{Sol-asso-SIE}). Here, 
\begin{equation} 
\label{eq:Def-resolvent}
K(t,s) := \sum_{n=1}^\infty \kappa^{\otimes n}(t,s) \, , 0\le s \le t 
\end{equation} 
denotes the resolvent kernel associated with $\kappa$. The kernels $\kappa^{\otimes n}$ are iteratively 
defined by $\kappa^{\otimes1}\equiv\kappa$ and 
\begin{equation} 
\label{eq:Def-kappa_otimes}
    \kappa^{\otimes n+1}(t,s)=\int\limits_{s}^{t}\kappa(t,u)\kappa^{\otimes n}(u,s)\;du\;,  
     0\le s\le t\;,\;n\in\mathbb{N}\; .
\end{equation} 
see \cite{M}). It is well-known that 
$$ 
|\kappa^{\otimes n} (t,s)| \le \frac{M_t^n (t-s)^{n-1}} {(n-1)!}\, , n \in\mathbb N 
$$ 
(see, e.g. \cite{M}, page 16). It follows that the resolvent kernel converges locally uniformly and 
satisfies the upper bound 
$$ 
| K (t,s) |\le M_T e^{M_T} \, , 0\le s\le t\le T\, . 
$$ 
We will prove in the Appendix, Proposition \ref{prop3}, that $K(t,s)$ is Lipschitz in $t$ with locally 
bounded Lipschitz constant.

\begin{theorem} 
\label{CLT1}  
Assume that the pair of parameters $(f,h)$ satisfies (A) and let $Y^{N}$, $N\in\mathbb{N}$, be as in \eqref{eq:Def-Y}. Then, 
\begin{equation}\label{eq:CLT-Cvg1}
    Y^{N}\xRightarrow{\hspace{.1cm}N\to\infty\hspace{.2cm}}G_{Y}\hspace{1cm}\text{on}\hspace{1cm}D\left(\mathbb{R}_{+},\mathbb{R}\right)\;,
\end{equation}
where $G_{Y}$ is an It\^{o} process given by 
\begin{equation}\label{eq:Def-G_Y}
    G_{Y}(t)=\int\limits_{0}^{t}K(t,s)W_{\lambda}(s)\;ds+W_{\lambda}(t)\;,\hspace{1cm}t\geq0\;,
\end{equation}
with $K$ being as in \eqref{eq:Def-resolvent} and 
\begin{equation}
\label{eq:Def-W_gamma}
    W_{\lambda}(t)=\int\limits_{0}^{t}\sqrt{\lambda(s)}\;dW(s)\;,\hspace{1cm}t\geq0\;,
\end{equation}
with $W$ being a standard Brownian motion and $\lambda$ being as in \eqref{eq:Def-gamma}. In particular, 
$G_{Y}$ is the unique solution of the stochastic Volterra integral equation 
\begin{equation}
\label{eq:Volterra_eq}
    G(t)=\int\limits_{0}^{t}\kappa(t,s)G(s)\;ds+W_{\lambda}(t)\;,\hspace{1cm}t\geq0\;.
\end{equation} 
\end{theorem}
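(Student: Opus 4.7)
The plan is to show that $Y^N$ is (asymptotically) the image under a deterministic continuous map of a sequence of martingales converging weakly to $W_\lambda$, and then invoke the martingale FCLT together with the continuous mapping theorem.

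First I would decompose $Y^N$. With the compensated Poisson martingales $M^N_i(t) := \int_0^t\!\int_0^\infty 1_{\{z\le \lambda^N(s)\}}(N_i(dz\,ds)-dz\,ds)$ and $\tilde M^N := \frac{1}{\sqrt N}\sum_{i=1}^N M^N_i$, one obtains
\[
Y^N(t) = \tilde M^N(t) + \int_0^t \sqrt N\bigl(\lambda^N(s)-\lambda(s)\bigr)\,ds .
\]
A second-order Taylor expansion of $f$ at $x_s$, combined with \eqref{eq:Def-X} and an exchange of integration order, yields the exact stochastic Volterra identity
\[
Y^N(t) = \tilde M^N(t) + \int_0^t \kappa(t,u)\,Y^N(u)\,du + E^N(t),
\]
where $E^N(t) = \tfrac{1}{2\sqrt N}\int_0^t f''(\xi^N_s)(X^N(s))^2\,ds$ is an $O(N^{-1/2})$ remainder. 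By Lemma~\ref{Sol-asso-SIE} this is an inhomogeneous second-kind Volterra equation for $Y^N$ with forcing $\tilde M^N+E^N$, whence
\[
Y^N = \Phi\bigl(\tilde M^N+E^N\bigr).
\]

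Next I would verify the two ingredients needed to pass to the limit. For the martingale, $\tilde M^N$ has jumps of size $N^{-1/2}$, so the Lindeberg condition is trivial, and its predictable quadratic variation is $\langle \tilde M^N\rangle_t = \int_0^t \lambda^N(s)\,ds$; the Lipschitz continuity of $f$ and $h$ combined with the propagation-of-chaos law of large numbers \eqref{eq:LLN} yields $\langle \tilde M^N\rangle\to\int_0^\cdot\lambda(s)\,ds$ in probability, so the martingale FCLT (Rebolledo, or Theorem VIII.3.11 in Jacod--Shiryaev) gives $\tilde M^N \Rightarrow W_\lambda$ in $D(\R_+,\R)$. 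For the remainder, the crude Lipschitz bound $|\lambda^N(s)-\lambda(s)|\le\norm{f'}_\infty |X^N(s)|/\sqrt N$ together with Grönwall's inequality and Doob's maximal inequality applied to $\tilde M^N$ give the a priori estimate $\sup_N\E\bigl[\norm{Y^N}_T^2\bigr]<\infty$; a Cauchy--Schwarz bound on $(X^N)^2$ in terms of $\int_0^\cdot (Y^N)^2\,du$ then leads to $\E\bigl[\norm{E^N}_T\bigr]=O(N^{-1/2})$, so $E^N\to 0$ uniformly on compacts in probability.

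Finally I would invoke the continuous mapping theorem. Since $W_\lambda$ is continuous a.s., Skorokhod convergence at this limit point is local uniform convergence; the uniform bound $|K(t,s)|\le M_T e^{M_T}$ together with the Lipschitz-in-$t$ property of $K$ supplied by Proposition~\ref{prop3} then make $F\mapsto \Phi(F)$ continuous at continuous paths, so $\Phi(\tilde M^N+E^N)\Rightarrow \Phi(W_\lambda)$. The right-hand side is exactly $G_Y$ as in \eqref{eq:Def-G_Y}, and Lemma~\ref{Sol-asso-SIE} identifies it as the unique solution of \eqref{eq:Volterra_eq}. The main obstacle is the a priori $L^2$ bound on $Y^N$: deriving it from the Taylor-expanded equation is circular because $E^N$ already contains $(Y^N)^2$, so one has to return to the raw Lipschitz estimate on $\lambda^N-\lambda$ and a Grönwall argument \emph{before} invoking the resolvent representation.
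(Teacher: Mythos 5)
Your proposal is correct and follows essentially the same route as the paper: the same Doob--Meyer decomposition, the same linearization around $x_s$ leading to the second-kind Volterra equation with forcing $\tilde M^N+E^N$ (your $\tilde M^N$, $E^N$ are the paper's $\bar M^N$, $r^N$), the same resolvent representation via Lemma~\ref{Sol-asso-SIE}, the martingale FCLT plus Slutsky, and the Skorokhod continuity of $\Phi$; your closing remark about avoiding circularity by deriving the a priori bound from the raw Lipschitz estimate and Gr\"onwall is exactly what Proposition~\ref{L2-bdd_X} does. The only (immaterial) deviations are using $f''$ in mean-value form rather than the integral Taylor remainder with Lipschitz $f'$, the predictable rather than the optional quadratic variation in the martingale CLT, and continuity of $\Phi$ only at continuous paths rather than full Skorokhod continuity.
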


\begin{proof}
First note that $Y^{N}$ admits the following Doob-Meyer decomposition 
\begin{equation}
\label{eq:Doob-Meyer}
    Y^{N}=\bar{M}^{N}+A^{N} 
\end{equation}
with
\begin{equation} 
\label{eq:Def-M_bar}
\bar{M}^{N}=\frac{1}{\sqrt{N}}\sum\limits_{i=1}^{N}\left(Z^{N}_{i}-\Lambda^{N}\right)
\end{equation}
and 
\begin{equation} 
\label{eq:Def-A}
    A^{N}=\sqrt{N}\left(\Lambda^{N}-m\right)
\end{equation}
where $\Lambda^{N}$ and $m$ are as in \eqref{eq:Def-Lambda} 
and \eqref{eq:Def-m} respectively. Note that $\bar{M}^N$ 
is $a.s.$ locally bounded in $t$, since the compensated 
Hawkes processes $Z_i^N - \Lambda^N$ are locally bounded, and that 
$A^N$ is continuous. The crucial step now is to represent $Y^N$ as the unique solution of the Volterra integral equation 
\begin{equation} 
\label{eq:Volterra_Eq_Y}  
Y^N (t) = \bar{M}^N (t) + r^N (t) + \int_0^t \kappa (t,s) Y^N (s)\, ds 
\end{equation} 
with the (continuous) remainder 
\begin{equation} 
\label{eq:Definition_r} 
r^N (t) := A^N (t) - \int_0^t f' (x_s) X^N (s)\, ds \, . 
\end{equation}  
Indeed, \eqref{eq:Volterra_Eq_Y} follows from the above definition 
of $r^N$ using Fubini's theorem, since by \eqref{eq:Def-X} 
$$ 
\int_0^t f' (x_s) \int_0^s h' (s-r) Y^N (r) \, dr \, ds  
= \int_0^t \kappa (t,r) Y^N (r)\, dr\, . 
$$ 
It follows from Lemma \ref{Sol-asso-SIE} that 
\begin{equation} 
\label{eq:Volterra_Eq_Y_repr}  
Y^N (t) = \Phi \left( \bar{M}^N  + r^N \right) \,  . 
\end{equation}

The martingale CLT implies that 
\begin{equation*} 
\bar{M}^{N}\xRightarrow{\hspace{.1cm}N\to\infty\hspace{.2cm}}W_{\lambda} 
\text{ on } D\left(\mathbb{R}_{+},\mathbb{R}\right)\;,
\end{equation*}
where 
\begin{equation*}
    W_{\lambda}(t)=\int\limits_{0}^{t}\sqrt{\lambda(s)}\;dW(s)\;\hspace{1cm}t\geq0\;,
\end{equation*}
for some standard Brownian motion $W$ (see Proposition 
\ref{Cvg_M_bar}). Since $r^{N}$, $N\in\mathbb{N}$, 
converges ucp to $0$ (Lemma \ref{Cvg_R}), it follows that  
\begin{equation*}
d_{S}^{T}\left(\bar{M}^N  + r^N,\bar{M}^{N}\right)\leq\norm{r^{N}}_{T}\xrightarrow[N\to\infty]
{\mathbb{P}}0\;,\quad T>0 . 
\end{equation*}
This implies that the Skorohod distance $d_S$ between $\bar{M} + r^N$ and
$\bar{M}^N$, as processes in $D \left( \R_+ , \R \right)$, goes to $0$ as $N\to\infty$.
A generalization of Slutsky's theorem  
(e.g. cf. \cite[Theorem 3.1]{Bill}) now yields that 
\begin{equation*}
\bar{M}^{N} + r^N \xRightarrow{\hspace{.1cm}N\to\infty\hspace{.2cm}}W_{\lambda}\hspace{1cm}\text{on}\hspace{1cm}D\left(\mathbb{R}_{+},\mathbb{R}\right) . 
\end{equation*}
Since $\Phi$ is Skorokhod continuous by Proposition \ref{Cont_Phi} it follows that 
\begin{equation*}
    \Phi\left(\bar{M}^{N} + r^N \right)\xRightarrow{\hspace{.1cm}N\to\infty\hspace{.2cm}}
    \Phi\left(W_{\lambda}\right)\hspace{1cm}\text{on}\hspace{1cm}D\left(\mathbb{R}_{+},\mathbb{R} 
    \right)\; .
\end{equation*} 
Obviously, $\Phi\left(W_{\lambda}\right) = G_{Y}$ as defined in \eqref{eq:Def-G_Y} which implies the 
assertion. 
\end{proof}

\begin{corollary} 
\label{CLT2}
Assume that the pair of parameters $(f,h)$ satisfies (A) and let $X^{N}$, $N\in\mathbb{N}$, be as in \eqref{eq:Def-X}. Then, 
\begin{equation} 
\label{eq:CLT-Cvg2}
    X^{N}\xRightarrow{\hspace{.1cm}N\to\infty\hspace{.2cm}}G_{X}\hspace{1cm}\text{on}\hspace{1cm}D\left(\mathbb{R}_{+},\mathbb{R}\right)\;,
\end{equation}
where
\begin{equation}\label{eq:Def-G_X}
    G_{X}(t)=\int\limits_{0}^{t}h'(t-s)G_{Y}(s)\;ds\;,\hspace{1cm}t\geq0\;,
\end{equation}
with $G_{Y}$ being as in \eqref{eq:Def-G_Y}. In particular, the limiting process $G_{X}$ solves the 
stochastic integral equation 
\begin{equation}
\label{eq:SDE-G_X}
    G(t)=\int\limits_{0}^{t}h(t-s)f'\left(x_{s}\right)G(s)\;ds+\int\limits_{0}^{t}h(t-s)\;dW_{\lambda}(s)\;,\quad t\geq0\;,
\end{equation}
with $W_{\lambda}$ being as in \eqref{eq:Def-W_gamma}.
\end{corollary}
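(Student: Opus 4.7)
The plan is to view $X^N$ as the image of $Y^N$ under the bounded linear convolution map $\Psi_h \colon D(\R_+, \R) \to C(\R_+, \R)$ defined by
\[
\Psi_h(y)(t) := \int_0^t h'(t-s)\, y(s)\, ds,
\]
so that $X^N = \Psi_h(Y^N)$ by the second representation in \eqref{eq:Def-X} and $G_X = \Psi_h(G_Y)$ by \eqref{eq:Def-G_X}. The weak convergence $X^N \Rightarrow G_X$ will then follow from Theorem \ref{CLT1} and the continuous mapping theorem, provided $\Psi_h$ is Skorokhod-continuous at $G_Y$.

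First I would establish this continuity. Since $G_Y$ is an It\^o process with continuous paths almost surely, it suffices to verify continuity of $\Psi_h$ at continuous inputs, and Skorokhod convergence to a continuous limit is equivalent to locally uniform convergence. For $y^n \to y$ locally uniformly with $y$ continuous, the bound
\[
\| \Psi_h(y^n) - \Psi_h(y) \|_T \le \| h' \|_{L^1[0,T]} \cdot \| y^n - y \|_T \to 0,
\]
valid for every $T > 0$ since $h'$ is continuous and hence locally integrable by (A.2), does the job, and the continuous mapping theorem yields \eqref{eq:CLT-Cvg2}.

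To derive the stochastic integral equation \eqref{eq:SDE-G_X}, I would start from the Volterra equation \eqref{eq:Volterra_eq} for $G_Y$, insert the definition of $\kappa$ from \eqref{eq:Def-kappa}, and apply Fubini's theorem to the region $\{(s,u) : 0 \le s \le u \le t\}$ to rewrite it as the semimartingale identity
\[
G_Y(t) = \int_0^t f'(x_u)\, G_X(u)\, du + W_\lambda(t),
\]
so that $dG_Y(t) = f'(x_t)\, G_X(t)\, dt + dW_\lambda(t)$. Since $h(0) = 0 = G_Y(0)$ and $h$ is continuously differentiable, a deterministic-kernel integration by parts gives
\[
G_X(t) = \int_0^t h'(t-s)\, G_Y(s)\, ds = \int_0^t h(t-s)\, dG_Y(s),
\]
and substituting the semimartingale decomposition of $dG_Y$ produces \eqref{eq:SDE-G_X}. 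The mildest technicality is the stochastic integration by parts against $W_\lambda$, but since $s\mapsto h(t-s)$ is a deterministic $C^1$ function this is immediate from the standard It\^o formula, so no real obstacle arises beyond the bookkeeping.
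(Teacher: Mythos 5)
Your proposal is correct and reaches both assertions of the corollary. For the weak convergence of $X^N = \Psi_h(Y^N)$, you take a genuinely different route from the paper: rather than proving that $\Psi_h$ is Skorokhod-continuous on all of $D(\mathbb{R}_+,\mathbb{R})$ (which the paper obtains from Proposition~\ref{Operator-Estimation}, applied with $g(t,s)=h'(t-s)$, yielding an explicit Lipschitz-type estimate in the Skorokhod distance), you note that the limit $G_Y$ is almost surely continuous, so by the continuous-mapping theorem it suffices to verify continuity of $\Psi_h$ at continuous inputs; there, Skorokhod convergence reduces to locally uniform convergence and the estimate $\|\Psi_h(y^n)-\Psi_h(y)\|_T\le\|h'\|_{L^1[0,T]}\|y^n-y\|_T$ closes the argument. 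Your shortcut is more elementary and avoids the bookkeeping with time changes $\gamma\in\Gamma_T$, but it is tailored to the present situation where the limit is continuous; the paper's estimate is uniform over all of $D$ and also delivers quantitative error bounds in the Skorokhod metric, which the authors want for the remark that their approach tracks approximation errors explicitly. (One minor point you should make precise: the measurable-continuous-mapping theorem also requires $\Psi_h$ to be Borel measurable, which is automatic under the paper's global continuity but requires a brief separate observation under yours; this is routine for such convolution operators, so it is not a real gap.) Your derivation of \eqref{eq:SDE-G_X} is the same as the paper's, just spelled out in more detail: you explicitly record the intermediate identity $G_Y(t)=\int_0^t f'(x_u)G_X(u)\,du + W_\lambda(t)$ obtained by Fubini from \eqref{eq:Volterra_eq} and \eqref{eq:Def-kappa}, whereas the paper compresses this into one step; the deterministic integration by parts giving $\int_0^t h'(t-s)G_Y(s)\,ds = \int_0^t h(t-s)\,dG_Y(s)$ is identical in both.
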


\begin{remark}
\label{RemarkCLT2} 
Corollary \ref{CLT2} should be compared with Theorem 5.6 of 
\cite{Chevallier2} for age-dependent Hawkes processes. Note that  
\eqref{eq:SDE-G_X} is the equivalent to equation (5.10) in 
\cite{Chevallier2}. Indeed, in the case where the intensity $\Psi (s,x)
\equiv \Psi (x)$ becomes independent of the age $s$, equation (5.10) 
for $\Gamma_t$ reduces to the integral equation 
$$ 
\Gamma_t  
= \int_0^t h(t-z) \langle u_z , \frac{\partial\Psi}{\partial y} 
(\cdot , \bar{\gamma} (z))\rangle \Gamma_z\, dz  
+ \int_0^t h(t-z) d W_z (1)\, , 
$$ 
since $\langle \eta_z , 1\rangle = 0$ for all $z$. Here, 
$W_t (1)$, $t\ge 0$, is a standard Brownian motion with 
variance $\int_0^t \Psi (\bar{\gamma} (z))\, dz$. Since $\Psi$ 
coincides with $f$ and $\bar{\gamma} (t)$ with $x_t$ in our present 
setting, the assertion now follows. The assumptions on $f$ in the 
present paper coincide with the assumptions made on $\Psi$ in 
\cite{Chevallier2}, however we require more regularity on $h$ ($C^1$ 
instead of H\"older-continuity).    
\end{remark}

\begin{proof} (of Corollary \ref{CLT2})  
Define the linear integral operator
\begin{equation}\label{eq:Def-Psi}
\Psi:D\left(\mathbb{R}_{+},\mathbb{R}\right)\to D\left(\mathbb{R}_{+},\mathbb{R}\right) 
\;,\quad\Psi\left(F\right)(t)=\int\limits_{0}^{t}F(s)h'(t-s)\;ds
\end{equation}
so that we can write $X^{N}(t) = \Psi\left(Y^{N}\right)(t)$. 
Proposition \ref{Operator-Estimation}, applied to $g(t,s) = h' (t-s)$, 
yields that 
\begin{align*}
\norm{\Psi\left(F\right)-\Psi\left(G\right)\circ\gamma}_{T} 
& \leq C_T \big( 2\left(\norm{F}_{L^1 [0,T]} + \norm{G}_{L^1 [0,T]}\right) \norm{id-\gamma}_{T} \\
  & \hspace{3cm} + \norm{F-G}_{L^1 [0,T]} \big) 
\end{align*} 
for any $F,G\in D\left(\mathbb{R}_{+},\mathbb{R}\right)$, $T>0$ and $\gamma\in\Gamma_{T}$. 
This estimation particularly implies that $\Psi$ is Skorokhod continuous (see Corollary \ref{Cont_Phi}). 
Hence, Theorem 
\ref{CLT1}, together with the continuous mapping theorem, yields 
\begin{equation*} 
X^N = \Psi\left(Y^{N}\right)\xRightarrow{\hspace{.1cm}N\to\infty\hspace{.2cm}}\Psi\left(G_{Y}\right) 
 = G_X \text{ on }D\left(\mathbb{R}_{+},\mathbb{R}\right)\, . 
\end{equation*} 
It remains to show that $G_{X}$ solves \eqref{eq:SDE-G_X}. To this end note that  
\begin{equation*}
G_{X}(t)=\int\limits_{0}^{t}h'(t-s)G_{Y}(s)\;ds=\int\limits_{0}^{t}h(t-s)\;dG_{Y}(s) 
\;,\hspace{1cm}t\geq0\;,
\end{equation*}
this time using $h(0)= G_{Y}(0) = 0$, so that \eqref{eq:Volterra_eq} now implies that 
\begin{align*}
G_X (t)  
& = \int_0^t h(t-s) f'\left(x_{s}\right) G_X (s)\;ds + \int_0^t h(t-s) dW_{\lambda}(s) \; .
\end{align*} 
\end{proof}

\begin{theorem}
\label{CLT3}
Assume that the pair of parameters $(f,h)$ satisfies (A) and let the intensity fluctuation be as in \eqref{eq:intensity_fluc}. Then, 
\begin{equation}\label{eq:CLT-Cvg3}
\sqrt{N}\left(\lambda^{N}-\lambda\right)\xRightarrow{\hspace{.1cm}N\to\infty\hspace{.2cm}}\sigma\hspace{1cm}\text{on}\hspace{1cm}D\left(\mathbb{R}_{+},\mathbb{R}\right)\;,
\end{equation}
where 
\begin{equation}
\label{eq:Def-sigma}
    \sigma(t)=f'\left(x_{t}\right)G_{X}(t)\;,\hspace{1cm}t\geq0\;,
\end{equation}
with $x_{t}$ and $G_{X}$ being as in \eqref{eq:Def-gamma} and \eqref{eq:Def-G_X} respectively.
\end{theorem}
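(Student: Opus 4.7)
The plan is to combine the weak convergence $X^N \Rightarrow G_X$ from Corollary \ref{CLT2} with a Taylor expansion of $f$ around $x_t$, and to control the resulting second order remainder uniformly on compacts using the tightness of $X^N$.

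More precisely, I would first write, for each $t\ge 0$,
\begin{equation*}
\sqrt{N}\bigl(\lambda^{N}(t)-\lambda(t)\bigr)
= \sqrt{N}\left[f\!\left(x_t+\frac{X^N(t)}{\sqrt{N}}\right)-f(x_t)\right]
= f'(x_t)\,X^N(t) + R^N(t),
\end{equation*}
where the Taylor remainder satisfies
\begin{equation*}
|R^N(t)| \le \frac{L_{f'}}{2\sqrt{N}}\,\bigl(X^N(t)\bigr)^2,
\end{equation*}
with $L_{f'}$ the Lipschitz constant of $f'$ (provided by (A.1)). Then for any $T>0$,
\begin{equation*}
\|R^N\|_T \le \frac{L_{f'}}{2\sqrt{N}}\,\|X^N\|_T^{\,2}.
\end{equation*}
Since $X^N \Rightarrow G_X$ on $D(\mathbb{R}_+,\mathbb{R})$ and $G_X$ has continuous paths, the continuous mapping theorem applied to the Skorokhod-continuous functional $F\mapsto \|F\|_T$ shows that $\|X^N\|_T$ is tight in $\mathbb{R}$, hence $\|R^N\|_T\xrightarrow{\mathbb{P}} 0$. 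In particular $R^N\to 0$ ucp.

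Next I would show that the main term $f'(x_\cdot)\,X^N(\cdot)$ converges weakly to $f'(x_\cdot)\,G_X(\cdot) = \sigma$. The function $t\mapsto x_t$ is continuous (it is a convolution of $h\in C^1$ with a locally bounded density), so $c(t):=f'(x_t)$ is continuous on $\mathbb{R}_+$. I would verify that the pointwise multiplication operator $\Theta: D(\mathbb{R}_+,\mathbb{R})\to D(\mathbb{R}_+,\mathbb{R})$, $\Theta(F)(t)=c(t)F(t)$, is Skorokhod continuous: if $F_n\to F$ in $d_S^T$ with time-changes $\gamma_n\in\Gamma_T$ such that $\|\gamma_n-\mathrm{id}\|_T\to 0$ and $\|F_n-F\circ\gamma_n\|_T\to 0$, then
\begin{equation*}
\|\Theta(F_n)-\Theta(F)\circ\gamma_n\|_T
\le \|c\|_T\,\|F_n-F\circ\gamma_n\|_T + \|c-c\circ\gamma_n\|_T\,\|F\|_T,
\end{equation*}
which tends to $0$ by the uniform continuity of $c$ on $[0,T]$ and boundedness of $F$ on $[0,T]$. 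The continuous mapping theorem then gives $\Theta(X^N)\Rightarrow \Theta(G_X)=\sigma$ on $D(\mathbb{R}_+,\mathbb{R})$.

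Finally I would combine both pieces: the ucp convergence of $R^N$ to $0$ translates into Skorokhod convergence in probability, so by the Slutsky-type argument used in the proof of Theorem \ref{CLT1} (cf. \cite[Theorem 3.1]{Bill}),
\begin{equation*}
\sqrt{N}(\lambda^N-\lambda) = \Theta(X^N) + R^N \Rightarrow \sigma \quad\text{on } D(\mathbb{R}_+,\mathbb{R}).
\end{equation*}
The only mildly delicate step is verifying the Skorokhod continuity of the multiplication operator $\Theta$; everything else is a direct consequence of Corollary \ref{CLT2}, Taylor's theorem, and tightness of $\|X^N\|_T$.
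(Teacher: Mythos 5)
Your proof is correct and follows essentially the same route as the paper's: Taylor expansion of $f$ at $x_t$, control of the quadratic remainder via a uniform bound on $\sup_{t\le T}|X^N(t)|$, Skorokhod continuity of multiplication by the continuous function $f'(x_\cdot)$, and the continuous mapping theorem combined with a Slutsky-type argument. The only cosmetic differences are that the paper bounds the remainder in $L^1$ using the uniform second-moment bound of Proposition \ref{L2-bdd_X} rather than tightness of $\|X^N\|_T$ deduced from Corollary \ref{CLT2}, and that you spell out the Skorokhod continuity of the multiplication operator, which the paper merely asserts.
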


\begin{proof}
First note that the intensity fluctuation can be rewritten as 
\begin{equation*}
\begin{aligned} 
    \sqrt{N}\left(\lambda^{N} (t) -\lambda (t)\right) & = \sqrt{N} \left( f\left( x_t  
       + \frac{X^N (t)}{\sqrt{N}}\right)   - f(x_t) \right) \\ 
       & = \int_0^1 f' \left(  x_t  + s\frac{X^N (t)}{\sqrt{N}}\right) \, ds X^N (t)  \\
       & =: F^{N} (t) X^{N} (t) 
\end{aligned} 
\end{equation*}
with 
\begin{equation*}
    F^{N}(t) := \int\limits_{0}^{1}f'\left(x_{t}+s\,\frac{X^{N}(t)}{\sqrt{N}}\right)\;ds 
     \;,\quad t\ge 0\;,\;N\in\mathbb{N}\;.
\end{equation*}
By Lipschitz continuity of $f'$ it holds 
\begin{equation*}
    \left|F^{N}(t)-f'\left(x_{t}\right)\right|\leq\frac{\|f'\|_{Lip}}{2\sqrt{N}} 
    \left|X^{N}(t)\right|\;,\quad t\geq0\;,\;N\in\mathbb{N}\;, 
\end{equation*}
such that 
\begin{equation*}
    \norm{F^{N}X^{N}-f'\left(x_{\cdot}\right)X^{N}}_{T}
    \le \frac{\|f'\|_{Lip}}{2\sqrt{N}}\sup\limits_{t\in\lbrack0,T\rbrack}\left|X^{N}(t)\right|^{2} 
    \xrightarrow[N\to\infty]{L^{1}}0\;,\quad T>0\;,
\end{equation*}
by Proposition \ref{L2-bdd_X}, which particularly implies 
\begin{equation*}
    d^{T}_{S}\left(F^{N}X^{N},f'\left(x_{\cdot}\right)X^{N}\right) 
    \xrightarrow[N\to\infty]{\mathbb{P}}0\;,\quad T>0\;.
\end{equation*}
Since $f'(x_t)$ is continuous it follows that the linear operator 
\begin{equation}
\label{eq:Def-Xi}
\Xi:D\left(\mathbb{R}_{+},\mathbb{R}\right)\to D\left(\mathbb{R}_{+},\mathbb{R}\right)\;,\quad 
\Xi\left(F\right)(t)=f'\left(x_{t}\right)F(t)
\end{equation}
is Skorokhod continuous, which implies by the continuous mapping theorem that 
\begin{equation*}
F^{N}X^{N}\xRightarrow{\hspace{.1cm}N\to\infty\hspace{.2cm}} f' (x_\cdot) G_{X} 
= \sigma \text{ on } D\left(\mathbb{R}_{+},\mathbb{R}\right) \, . 
\end{equation*}
\end{proof}

\subsection{Extensions to multi-class systems} 
\label{ExtensionsSystems} 

The results can be easily generalized to multi-class systems of 
interacting nonlinear Hawkes processes. More specifically fix 
$K\in\N$ and suppose that for each $N$ we are given $K$ classes 
of Hawkes processes $\{Z^N_{k,i} : 1\le i\le N_k\}$, $1\le k\le K$, 
where $N_1 , \ldots , N_K$ are positive integers with 
$N = N_1 + \ldots + N_K$ such that for each  
$k=1 , \ldots , K$ the sequence $(N_k)$ satisfies
$\lim_{N\to\infty} \frac{N_k}N = p_k \in ]0,1]$. 
Suppose that $Z_{k,i}^N$ are solutions of the following 
coupled systems of integral equations 
$$ 
\begin{aligned} 
Z_{k,i}^N (t) & = \int_0^t \int_0^\infty 1_{\{z\le \lambda_{k}^N (s) \}} N_{k,i} (dz \times ds) , 
t\ge 0,  \\ 
\lambda_k^N (t) & = f_k\left( \sum_{l=1}^K \frac{1}{N_l} \sum_{i=1}^{N_l} \int_0^{t-} h_{k,l} (t-s) 
dZ_{l,i}^N (s)\right) 
\end{aligned} 
$$ 
where $N_{k,i}$, $k = 1 , \ldots , K$, $i= 1 , \ldots , N_k$, 
are independent Poisson random measures with intensity measure 
$dz\, ds$ on $\R_+\times \R_+$ and $(f_k)$ (resp. $(h_{k,l})$) 
satisfy Assumption (A.1) (resp. (A.2)). Let $m_k$, $k =1 ,  
\ldots , K$, be the unique solution of the following coupled 
system of integral equations 
$$ 
m_{k} (t) = \int_0^t f_k \left( \int_0^s \sum_{l=1}^K h_{k,l} 
(s-u) \, dm_l (u) \right)\, ds \, ,\  t\ge 0, 
$$ 
and let $\bar{Z}_{k,i}^N$ be the independent Poisson processes 
defined by 
$$ 
\begin{aligned} 
\bar{Z}_{k,i}^N (t) 
& = \int_0^t \int_0^\infty 1_{\{z\le \lambda_{k} (s) \}} 
N_{k,i} (dz \times ds)\, ,\  t\ge 0,  
\end{aligned} 
$$ 
where 
\begin{equation}
\label{def:Lambda}
\lambda_k (t) = \frac{dm_k}{dt}(t) .
\end{equation}
Ditlivsen et. al. have shown in the proof of 
\cite[Theorem 1]{DitLoe} that there exists a constant $C(T)$ 
such that for each $N$ 
\begin{equation*} 
\mathbb{E}\left\lbrack\norm{Z^{N}_{k,i}-\bar{Z}_{k,i}^N}_{T}\right
\rbrack 
\le C(T) \sum_{k=1}^K \frac 1{\sqrt{N_k}}\,  , 
\ 1\le i\leq N_k \, ,\ N\in\mathbb{N}, 
\end{equation*}
(see in particular equation (2.11) on page 1847 in \cite{DitLoe}). One can then show as an 
extension of Theorem \ref{CLT1} that the process $\Y^N =(Y_1^N , \ldots , Y_K^N )$,  
with 
$$ 
Y_k^N (t) = \frac{1}{\sqrt{N_k}} \sum_{i=1}^{N_k} (Z_{k,i} (t) - m_k (t)) \, , \ 1\le k\le K,  
$$ 
converges weakly as $N\to\infty$ to the $K$-dimensional It\^o-process $\G_\Y =(G_1 , \ldots , G_K)$,  
with 
$$ 
G_k (t) = \int_0^t \sum_{l=1}^K K_{k,l} (t,s) W_{\lambda_l} (s)\, ds + W_{\lambda_k} (t) 
$$ 
with a matrix-valued resolvent kernel $K_{k,l}$, $1\le k,l\le K$, given by  
\begin{equation*} 
K_{k,l} (t,s) := \sum_{n=1}^\infty \kappa^{\otimes n}_{k,l} (t,s) 
\, , \ 0\le s \le t,  
\end{equation*} 
and this time  
$$ 
\kappa_{k,l}^{\otimes 1}(t,s) = \sqrt{\frac{p_k}{p_l}} \int_s^t f_k^\prime 
\left( \sum_{l=1}^K \int_0^u h_{k,l} (u-v) \, dm_l (v)\right)
h^\prime_{k,l} (u-s)\, du 
$$
and 
\begin{equation*} 
\kappa^{\otimes n+1}_{k,l}(t,s)= \sum_{l_1=1}^K \int\limits_{s}^{t}\kappa^{\otimes 1} (t,u)_{k,l_1} 
\kappa^{\otimes n}_{l_1 , l}(u,s)\;du\;,  0\le s\le t\;,\;n\in\mathbb{N}\; .
\end{equation*} 
Moreover, 
$$ 
W_{\lambda_k} (t) = \int_0^t \sqrt{\lambda_k} (s) dW_k (s) , \ t\ge 0, 
$$ 
for independent 1D-standard Brownian motions $W_k$, $1\le k\le K$, 
and $\lambda_k$ defined by \eqref{def:Lambda}. 

The matrix-valued resolvent kernel satisfies the same continuity properties as in the scalar-valued case. 
The proof of the weak convergence of $\Y$ can be carried out as a straightforward generalization of the 
scalar-valued case. Mainly only one new aspect shows up in the multi-class generalization 
$$ 
Y^N_k (t) = \bar{M}_k^N (t) + r^N_k (t) + \sum_{l=1}^K \int_s^t \kappa^{\otimes 1}_{k,l} (t,s) Y_l^N (s)\, ds 
$$
of \eqref{eq:Volterra_Eq_Y} due to the additional convergence $\frac{N_k}{N} \to p_k$, which causes a 
minor additional effort in the control of the remainder $r_k^N$. 

Similarly, one can show as an extension of Theorem \ref{CLT3}, the following weak convergence 
$$ 
\sqrt{N_k} \left( \lambda_k^N - \lambda_k\right) 
\xRightarrow{\hspace{.1cm}N\to\infty\hspace{.2cm}}\sigma_k , \ 1\le k\le K,  
$$ 
where 
$$ 
\sigma_k (t) = f_k^\prime \left( \sum_{l=1}^K \int_0^t h_{k,l} (t-s) \, dm_l (s)\right) 
\sum_{l=1}^K \sqrt{\frac{p_k}{p_l}} \int_0^t h_{k,l}^\prime (t-s) G_l (s)\, ds. 
$$

\section{Proofs}
\label{Proof1}

Throughout the whole section we assume that the pair of parameters $(f,h)$ satisfies (A).

\begin{lemma} 
\label{Estimate_Lambda}
Let $\Lambda^N$ be as in \eqref{eq:Def-Lambda}. Then,
\begin{equation*}
\mathbb{E}\left\lbrack\Lambda^{N}(t)\right\rbrack 
\leq f(0)t\exp\left(\|f'\|_\infty\norm{h}_{t}t\right)\;, 
\hspace{.5cm}t\geq 0\;.
\end{equation*}
\end{lemma}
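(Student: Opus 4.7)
The plan is to reduce the statement to a linear Grönwall inequality for the scalar function $u(t) := \mathbb{E}[\Lambda^N(t)]$. The key input is the Lipschitz continuity of $f$ from (A.1), whose constant is bounded by $\|f'\|_\infty$.

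First, from $f(x) \leq f(0) + \|f'\|_\infty |x|$ applied to the definition \eqref{eq:Def-small_Lambda} of $\lambda^N$, together with the pointwise bound $|h(s-u)| \leq \|h\|_s$ for $u \in [0,s]$, one obtains
\begin{equation*}
\lambda^N(s) \leq f(0) + \|f'\|_\infty \|h\|_s \cdot \frac{1}{N} \sum_{i=1}^N Z_i^N(s-).
\end{equation*}
By exchangeability of the $Z_i^N$ and the compensator identity \eqref{eq:Def-Lambda}, $\mathbb{E}[Z_i^N(s)] = u(s)$ for each $i$. Taking expectations, fixing $T > 0$, using that $s \mapsto \|h\|_s$ is non-decreasing on $[0,T]$, and integrating from $0$ to $t \leq T$ yields
\begin{equation*}
u(t) \leq f(0) t + \|f'\|_\infty \|h\|_T \int_0^t u(s) \, ds.
\end{equation*}
Grönwall's inequality in the version for a non-decreasing forcing term $a(t) = f(0) t$ then gives $u(t) \leq f(0) t \exp(\|f'\|_\infty \|h\|_T t)$; evaluated at $t = T$ and relabelling, this is the stated estimate.

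The only point that requires care is the a priori finiteness of $u(t)$, without which the Grönwall step is formally vacuous. I would handle this by localisation with the stopping times $\tau_n := \inf\{t : \sum_{i=1}^N Z_i^N(t) \geq n\}$, which increase almost surely to $+\infty$ by non-explosion of the Hawkes process (cf. \cite[Theorem 6]{DelattreEtAl}); running the argument above for $\mathbb{E}[\Lambda^N(t \wedge \tau_n)]$ gives a bound uniform in $n$, after which monotone convergence concludes. Apart from this mild technicality, no real obstacle is present.
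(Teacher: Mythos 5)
Your proof is correct and follows essentially the same route as the paper's: a Lipschitz bound on $f$ followed by Gronwall's inequality, with $|h|$ estimated by its running supremum norm. The only cosmetic difference is that you apply Gronwall to $u(t)=\mathbb{E}\left[\Lambda^{N}(t)\right]$ with forcing term $f(0)t$, whereas the paper applies it to $\mathbb{E}\left[\lambda^{N}(t)\right]$ with constant forcing $f(0)$ and then integrates; your localisation argument for the a priori finiteness is a sound addition that the paper's proof leaves implicit.
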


\begin{proof}
Fix $N\in\mathbb{N}$ and note that the Lipschitz continuity of $f$ yields
\begin{align*}
    \mathbb{E}\left\lbrack\lambda^{N}(t)\right\rbrack 
    & \le  f(0) + \|f'\|_\infty \int_0^t \left|h(t-s)\right|\,\mathbb{E}\left\lbrack 
      \lambda^{N}(s)\right\rbrack\;ds\\
    & \le f(0) + \|f'\|_\infty \norm{h}_{t} \int_0^t \mathbb{E}\left\lbrack\lambda^{N}(s)\right\rbrack\;ds
\end{align*}
for any $t\ge 0$. Therefore  
\begin{equation*}
    \mathbb{E}\left\lbrack\lambda^{N}(t)\right\rbrack\leq f(0)\exp\left( \|f'\|_\infty \norm{h}_{t}t\right)\; ,
\end{equation*}
by Gronwall's inequality. In particular, we have 
\begin{equation*}
\mathbb{E}\left\lbrack\Lambda^{N}(t)\right\rbrack 
= \int_0^t\mathbb{E}\left\lbrack\lambda^{N}(s)\right\rbrack\;ds 
  \le f(0)t\exp\left(\|f'\|_\infty\norm{h}_{t}t\right)\; .
\end{equation*}
\end{proof}

\begin{proposition} 
\label{L2-bdd_X} 
Let $X^{N}$ be as in \eqref{eq:Def-X} and $T > 0$. Then there exists a constant 
$C(T)$, such that 
\begin{equation}
\mathbb{E}\left\lbrack\sup\limits_{t\in\lbrack0,T\rbrack}\left|X^{N}(t)\right|^{2}\right\rbrack 
\le C(T) \; \quad T>0\;,\;N\in\mathbb{N}.
\end{equation}
\end{proposition}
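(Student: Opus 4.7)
The plan is to exploit the Doob--Meyer decomposition $Y^N = \bar{M}^N + A^N$ from \eqref{eq:Doob-Meyer}, together with a Cauchy--Schwarz bound on $X^N$ as a convolution of $Y^N$ with $h'$, and then close the resulting circular estimate by Gronwall's inequality. Concretely, from $X^N(t) = \int_0^t h'(t-s) Y^N(s)\,ds$ (cf. \eqref{eq:Def-X}) and Cauchy--Schwarz, together with the boundedness of $h'$ on $[0,T]$ (which follows from assumption (A.2)), I would first derive the pathwise inequality
\[
\sup_{u\le t} |X^N(u)|^2 \le \|h'\|_T^{\,2}\, T \int_0^t |Y^N(s)|^2\, ds
\le 2 \|h'\|_T^{\,2}\, T \int_0^t \bigl( |\bar{M}^N(s)|^2 + |A^N(s)|^2\bigr)\, ds
\]
for every $t\in [0,T]$.

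Next I would handle each of the two summands separately. For $\bar{M}^N$, the no-simultaneous-jump property \eqref{eq:no-sim-jump} implies that the cross-brackets $\langle Z_i^N - \Lambda^N, Z_j^N - \Lambda^N\rangle$ vanish for $i\ne j$, so that $\langle \bar{M}^N\rangle_t = \Lambda^N(t)$. Combined with Doob's $L^2$-maximal inequality and Lemma \ref{Estimate_Lambda} this gives the uniform bound $\mathbb{E}[\sup_{s\le T}|\bar{M}^N(s)|^2] \le 4\,\mathbb{E}[\Lambda^N(T)] \le C(T)$. For $A^N$, Lipschitz continuity of $f$ applied to $\lambda^N(s)-\lambda(s) = f(x_s + X^N(s)/\sqrt{N}) - f(x_s)$ (see \eqref{eq:intensity_fluc}) yields the pathwise bound $\sqrt{N}|\lambda^N(s)-\lambda(s)| \le \|f'\|_\infty |X^N(s)|$, and hence by Cauchy--Schwarz,
\[
|A^N(s)|^2 \le \|f'\|_\infty^{\,2}\, T \int_0^s |X^N(u)|^2\, du.
\]

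Setting $\psi(t) := \mathbb{E}[\sup_{s\le t}|X^N(s)|^2]$ and assembling the estimates, one obtains an integral inequality of the form
\[
\psi(t) \le C_1(T) + C_2(T) \int_0^t \psi(s)\, ds,\qquad t\in [0,T],
\]
from which Gronwall's inequality yields $\psi(T) \le C_1(T) e^{C_2(T) T}$.

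The principal obstacle is not the algebra but the circularity of the estimate: Gronwall requires the a priori finiteness of $\psi(t)$, which is not obvious because $A^N$ is bounded only in terms of $X^N$ itself. I would resolve this by a standard localization: introduce the stopping times $\tau_n := \inf\{t\ge 0 : |Y^N(t)|\ge n\}$ (or equivalently $\tau_n := \inf\{t : \sum_i Z_i^N(t) \ge n\}$), which satisfy $\tau_n\nearrow \infty$ almost surely since the Hawkes processes are non-explosive on bounded intervals (by Lemma \ref{Estimate_Lambda}). The entire argument above is first carried out for the stopped quantities $\psi_n(t) := \mathbb{E}[\sup_{s\le t\wedge \tau_n}|X^N(s)|^2]$, which are finite by construction, and the resulting bound $\psi_n(T) \le C(T)$ is uniform in $n$. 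Passing to the limit $n\to\infty$ via monotone convergence then delivers the desired estimate.
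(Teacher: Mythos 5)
Your proof is correct, and it reaches the same bound through a genuinely different organization of the Gronwall step. The paper also starts from $X^N(t)=\int_0^t h'(t-s)Y^N(s)\,ds$, the decomposition $Y^N=\bar M^N+A^N$, the pathwise bound $|A^N(s)|\le \|f'\|_\infty\int_0^s|X^N(r)|\,dr$, Doob's inequality and Lemma \ref{Estimate_Lambda}; but it applies Gronwall \emph{pathwise} to the inequality
\begin{equation*}
|X^N(t)|\le \Bigl(\norm{\bar M^N}_t+\|f'\|_\infty\int_0^t|X^N(u)|\,du\Bigr)\norm{h'}_{L^1[0,t]},
\end{equation*}
obtaining $|X^N(t)|\le C_0(t)\norm{\bar M^N}_t$ almost surely, and only then squares and takes expectations. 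Since $t\mapsto X^N(t)$ is a.s. continuous and locally bounded (being the convolution of the locally bounded path $Y^N$ with $h'$), the a priori finiteness needed for the pathwise Gronwall is automatic, so no localization is required. You instead square first (via Cauchy--Schwarz), take expectations, and run Gronwall on $\psi(t)=\mathbb{E}[\sup_{s\le t}|X^N(s)|^2]$; you correctly diagnose that this version is circular without knowing $\psi<\infty$ a priori, and your stopping-time localization $\tau_n\nearrow\infty$ combined with monotone convergence closes that gap properly (the stopped inequalities only involve $X^N$ on $[0,\tau_n]$, where everything is bounded, so the uniform-in-$n$ Gronwall bound passes to the limit). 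Your computation of the bracket is also a legitimate variant: you use the predictable bracket $\langle\bar M^N\rangle_t=\Lambda^N(t)$ directly, whereas the paper uses the optional bracket $[\bar M^N]_T=\frac1N\sum_iZ_i^N(T)$ and then takes expectations; both give $4\,\mathbb{E}[\Lambda^N(T)]$. The net effect is the same explicit bound; the paper's pathwise route is marginally shorter because it sidesteps the localization, while yours is the more routine ``expectation-level Gronwall'' template.
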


\begin{proof}
Integration by parts and $h(0)=Y^{N}(0)=0$ imply that
\begin{equation*}
X^{N}(t)\overset{\eqref{eq:Def-X}}{=} \int\limits_{0}^{t}h(t-s)\;dY^{N}(s) 
=\int\limits_{0}^{t}Y^{N}(s)h'(t-s)\;ds \, . 
\end{equation*}
It follows from \eqref{eq:Doob-Meyer} that 
\begin{align*}
    \left|X^{N}(t)\right|&\leq\int\limits_{0}^{t}\left|Y^{N}(s)\right|\;\left|h'(t-s)\right|\;ds\\
    & \leq\int\limits_{0}^{t}\left|\bar{M}^{N}(s)\right|\;\left|h'(t-s)\right|\;ds  
    + \int\limits_{0}^{t}\left|A^{N}(s)\right|\;\left|h'(t-s)\right|\;ds\, . 
\end{align*} 
Now, 
\begin{align*}
\left| A^{N}(s) \right| 
& \overset{\hspace{.25cm}\eqref{eq:Def-A}\hspace{.2cm}}{=} 
\sqrt{N}\left|\Lambda^{N}(s)-m(s)\right|\overset{\eqref{eq:Def-Lambda} 
+\eqref{eq:Def-gamma}}{\leq}\int\limits_{0}^{s}\sqrt{N}\left|\lambda^{N}(r)-\lambda(r)\right|\;dr\\
& = \int\limits_{0}^{s}\left| \int_0^1 f'\left(x_{r}+u\,\frac{X^{N}(r)}
    {\sqrt{N}}\right) \; du X^N (r) \right|\;dr \\ 
& \leq \|f'\|_\infty \int\limits_{0}^{s}\left|X^{N}(r)\right|\;ds
\end{align*}
and therefore 
\begin{align*}
\left|X^{N}(t)\right|  
& \leq\int\limits_{0}^{t} \left( \left|\bar{M}^{N}(s)\right| + \|f'\|_\infty \int\limits_{0}^{s} 
  \left|X^{N}(u)\right|\;du \right) \left|h'(t-s)\right| \; ds \\
& \leq \left( \norm{\bar{M}^{N}}_{t} + \|f'\|_\infty \int_0^t \left|X^{N}(u)\right|\, du \right)  
  \int_0^t \left|h'(t-s)\right|\;ds \\ 
& \leq \left( \norm{\bar{M}^{N}}_{t} + \|f'\|_\infty \int_0^t \left|X^{N}(u)\right|\, du \right)  
  \norm{h'}_{L^1 [0,t]} 
\end{align*}
holds for any $t\geq0$ and $N\in\mathbb{N}$. By Gronwall's lemma for right-continuous functions 
(e.g. cf. \cite[Lemma 4]{Gronwall}) it follows that  
\begin{equation*}
    \left|X^{N}(t)\right|\leq C_{0}(t)\norm{\bar{M}^{N}}_{t}\;,\hspace{1cm}t\geq0\;,
\end{equation*}
with
\begin{equation*}
    C_{0}(t)=\norm{h'}_{L^1 [0,t]} \exp\left(\|f'\|_\infty \norm{h'}_{L^1 [0,t]} t\right) 
    \; .
\end{equation*}
We particularly obtain 
\begin{equation*}
    \mathbb{E}\left\lbrack\sup\limits_{t\in\lbrack0,T\rbrack}\left|X^{N}(t)\right|^{2}\right\rbrack 
    \leq C^{2}_{0}(T)\mathbb{E}\left\lbrack\norm{\bar{M}^{N}}_{T}^{2}\right\rbrack\;,\hspace{1cm}T>0.
\end{equation*}
Since $\bar{M}^{N}$ is a c\`adl\`ag martingale, Doob's inequality now implies that 
\begin{equation*}
    \mathbb{E}\left\lbrack\norm{\bar{M}^{N}}_{T}^{2}\right\rbrack 
    \le 4 \mathbb{E} \left( \left( \bar{M}^N (T)\right)^2 \right) 
    = 4 \mathbb{E}\left\lbrack\left\lbrack\bar{M}^{N}\right\rbrack_{T}\right\rbrack\; .
\end{equation*}
Now, 
\begin{equation*}
    \left\lbrack\bar{M}^{N}\right\rbrack_{T} =\left\lbrack\frac{1}{\sqrt{N}} 
    \sum\limits_{i=1}^{N}Z^{N}_{i}\right\rbrack_{T}\overset{a.s.}{=} 
    \frac{1}{N}\sum\limits_{i=1}^{N}Z^{N}_{i}(T)\;, 
    \quad T>0\;,\;N\in\mathbb{N}\;,
\end{equation*}
since each $Z^{N}_{i}$ is a counting process with continuous cumulative intensity $t\mapsto\Lambda^{N}(t)$ and $Z^{N}$ satisfies \eqref{eq:no-sim-jump}. Hence  
\begin{align*}
    \mathbb{E}\left\lbrack\left\lbrack\bar{M}^{N}\right\rbrack_{T}\right\rbrack=\frac{1}{N}\sum\limits_{i=1}^{N}\mathbb{E}\left\lbrack Z^{N}_{i}(T)\right\rbrack=\mathbb{E}\left\lbrack\Lambda^{N}(T)\right\rbrack\;, \quad t\ge 0\;,\;N\in\mathbb{N}\;,
\end{align*}
such that the previous Lemma \ref{Estimate_Lambda} now yields the assertion with 
\begin{equation*}
C(T) =  C^{2}_{0}(T) 4 f(0)T \exp\left(\|f'\|_\infty \norm{h}_{T} T\right)\;,\quad T>0. 
\end{equation*}
\end{proof}

\begin{lemma}
\label{Cvg_R}
Let 
\begin{equation}\label{eq:Def-small_r}
r^{N}(t)=A^{N}(t)-\int_0^t f' (x_s) X^{N}(s)\;ds\;,\hspace{1cm}t\geq0\;,\;N\in\mathbb{N}\;, 
\end{equation}
with $X^{N}$ and $A^{N}$ being as in \eqref{eq:Def-X} and \eqref{eq:Def-A} 
respectively. Then, 
\begin{equation}
    \norm{r^{N}}_{T}\xrightarrow[\hspace{.1cm}N\to\infty\hspace{.1cm}]{L^{1}}0\;,\hspace{1cm}T>0\;.
\end{equation}
In particular, $r^{N}$, $N\in\mathbb{N}$, converges ucp to 0.
\end{lemma}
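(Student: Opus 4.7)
The plan is to expand $A^N$ using the Taylor-expansion representation already established earlier in the paper. Specifically, from equations \eqref{eq:Def-A}, \eqref{eq:Def-Lambda}, and \eqref{eq:Def-gamma}, I would write
\begin{equation*}
A^N(t) = \sqrt{N}(\Lambda^N(t) - m(t)) = \int_0^t \sqrt{N}(\lambda^N(s) - \lambda(s))\, ds,
\end{equation*}
and then use the identity $\sqrt{N}(\lambda^N(s) - \lambda(s)) = F^N(s) X^N(s)$, established in the proof of Theorem \ref{CLT3}, where $F^N(s) = \int_0^1 f'(x_s + u X^N(s)/\sqrt{N})\, du$. Subtracting the defining integral of $r^N$ yields the clean representation
\begin{equation*}
r^N(t) = \int_0^t \bigl(F^N(s) - f'(x_s)\bigr) X^N(s)\, ds.
\end{equation*}

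Next I would invoke the Lipschitz continuity of $f'$ from Assumption (A.1). Since $F^N(s)$ is an average of $f'$ over an interpolation from $x_s$ to $x_s + X^N(s)/\sqrt{N}$, a direct Lipschitz estimate gives
\begin{equation*}
|F^N(s) - f'(x_s)| \le \frac{\|f'\|_{\mathrm{Lip}}}{2\sqrt{N}} |X^N(s)|,
\end{equation*}
exactly as used in the proof of Theorem \ref{CLT3}. Combining the two displays, I obtain
\begin{equation*}
\|r^N\|_T \le \frac{\|f'\|_{\mathrm{Lip}}}{2\sqrt{N}} \int_0^T |X^N(s)|^2\, ds \le \frac{\|f'\|_{\mathrm{Lip}}\, T}{2\sqrt{N}} \sup_{t\in[0,T]} |X^N(t)|^2.
\end{equation*}

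Taking expectations and applying Proposition \ref{L2-bdd_X} gives
\begin{equation*}
\mathbb{E}[\|r^N\|_T] \le \frac{\|f'\|_{\mathrm{Lip}}\, T\, C(T)}{2\sqrt{N}} \xrightarrow[N\to\infty]{} 0,
\end{equation*}
which is the $L^1$ statement. The ucp convergence is then immediate from Markov's inequality applied to $\|r^N\|_T$. I don't anticipate any real obstacle here: the two key inputs (the Taylor-remainder identity and the uniform $L^2$ bound of Proposition \ref{L2-bdd_X}) have already been set up in the earlier parts of the paper, so the lemma amounts to combining them with one application of the Lipschitz bound for $f'$.
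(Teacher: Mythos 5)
Your proposal is correct and follows essentially the same route as the paper: the same Taylor-remainder representation of $r^N$, the same Lipschitz bound on $f'$ producing the factor $\|f'\|_{\mathrm{Lip}}/(2\sqrt{N})\,|X^N(s)|^2$, and the same appeal to Proposition \ref{L2-bdd_X} to conclude. The only cosmetic difference is that you pass through $T\sup_{t\le T}|X^N(t)|^2$ where the paper integrates $\mathbb{E}[|X^N(t)|^2]$ directly over $[0,T]$; both are covered by the same proposition.
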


\begin{proof} 
Fix $N\in\mathbb{N}$ and note that 
\begin{align*}
A^{N}(t) 
& \overset{\hspace{.25cm}\eqref{eq:Def-A}\hspace{.2cm}}{=}\sqrt{N}\left(\Lambda^{N}(t)-m(t)\right)\overset{\eqref{eq:Def-Lambda}+\eqref{eq:Def-gamma}}{=}\int\limits_{0}^{t}\sqrt{N}\left(\lambda^{N}(s)-\lambda(s)\right)\;ds \\
& \overset{\hspace{.25cm}\eqref{eq:intensity_fluc}\hspace{.2cm}}{=}\int\limits_{0}^{t} 
\int\limits_{0}^{1}f'\left(x_{s}+u\,\frac{X^{N}(s)}{\sqrt{N}}\right)\;du X^{N}(s) \;ds
\end{align*}
holds for any $t\ge 0$. Consequently, 
$$ 
\begin{aligned} 
r^{N}(t) 
& = \int_0^t \left(\int_0^1 f'\left(x_{s}+u\,\frac{X^{N}(s)}{\sqrt{N}}\right)-f'\left(x_{s}\right)\;du\right) X^{N}(s)\;ds 
\end{aligned}
$$
for any $t\geq0$ and any $N\in\mathbb{N}$ and therefore 
\begin{align*}
|r^{N} (t)| 
& \le \int_0^t \int_0^1 \left|f'\left(x_{s} + u\,\frac{X^N (s)}{\sqrt{N}}\right) 
   -f' \left( x_{s} \right)\right|\; du \left|X^N (s)\right| \; ds \\
& \le \int_0^t \int_0^1 \frac{\|f'\|_{Lip}}{\sqrt{N}} u\;du  
      \left|X^{N}(s)\right|^2\;ds \, . 
\end{align*}
It follows that 
\begin{equation*}
\mathbb{E}\left\lbrack\norm{r^{N}}_{T}\right\rbrack 
\le \frac{\|f'\|_{Lip}}{2\sqrt{N}}\int\limits_{0}^{T}\mathbb{E}\left\lbrack\left|X^{N}(t)\right|^{2}
\right\rbrack\;dt\xrightarrow{N\to\infty}0\;,\hspace{1cm}T>0\;,
\end{equation*}
by the previous Proposition \ref{L2-bdd_X}.
\end{proof}

\begin{proposition} 
\label{Cvg_M_bar}   
Let $\bar{M}^{N}$, $N\in\mathbb{N}$, be as in \eqref{eq:Def-M_bar}. Then,
\begin{equation}
    \bar{M}^{N}\xRightarrow{\hspace{.1cm}N\to\infty\hspace{.2cm}}W_{\lambda}\hspace{1cm}\text{on}\hspace{1cm}D\left(\mathbb{R}_{+},\mathbb{R}\right)\;,
\end{equation}
where
\begin{equation}
    W_{\lambda}(t)=\int\limits_{0}^{t}\sqrt{\lambda(s)}\;dW(s)\;,\hspace{1cm}t\geq0\;,
\end{equation}
with $W$ being a standard Brownian motion.
\end{proposition}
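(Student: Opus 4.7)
The plan is to apply a functional martingale central limit theorem (e.g., Theorem 7.1.4(b) in Ethier--Kurtz, or Theorem VIII.3.11 in Jacod--Shiryaev) to the sequence $\bar M^N$. That result requires two inputs: convergence in probability of the predictable quadratic variation $\langle \bar M^N\rangle_t$ to a continuous deterministic function $C(t)$, and negligibility of the jumps. The limit is then the continuous Gaussian martingale with quadratic variation $C(t)$; here we expect $C(t)=m(t)=\int_0^t\lambda(s)\,ds$, so that the limit coincides with $W_\lambda=\int_0^\cdot \sqrt{\lambda(s)}\,dW(s)$.

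First, I would observe that each $Z_i^N-\Lambda^N$ is a square-integrable martingale (counting process with the continuous compensator $\Lambda^N$), hence so is $\bar M^N$. Next, I compute the predictable quadratic variation. The no-simultaneous-jumps condition \eqref{eq:no-sim-jump} gives $[Z_i^N-\Lambda^N,Z_j^N-\Lambda^N]_t=0$ for $i\ne j$ (the cross quadratic variation is a sum of joint jumps since $\Lambda^N$ is continuous), and since any product of two orthogonal martingales is a local martingale, the predictable cross brackets vanish as well. Using $\langle Z_i^N-\Lambda^N\rangle_t=\Lambda^N(t)$ for each $i$, one finds
\begin{equation*}
\langle \bar M^N\rangle_t \;=\; \frac{1}{N}\sum_{i=1}^N \langle Z_i^N-\Lambda^N\rangle_t \;=\; \Lambda^N(t).
\end{equation*}

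Second, I would show $\Lambda^N(t)\to m(t)$ in probability for every $t\ge 0$. By definition of $A^N$ in \eqref{eq:Def-A}, $\sqrt{N}(\Lambda^N(t)-m(t))=A^N(t)$, and the same calculation used in the proof of Proposition~\ref{L2-bdd_X} yields $|A^N(t)|\le \|f'\|_\infty\int_0^t |X^N(s)|\,ds$. Since Proposition~\ref{L2-bdd_X} gives $\mathbb E[\sup_{s\le T}|X^N(s)|^2]\le C(T)$ uniformly in $N$, we obtain $\mathbb E[|\Lambda^N(t)-m(t)|^2]\le C(t)\, t^2/N\to 0$, establishing the convergence. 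For the jumps, the unit jumps of each $Z_i^N$ together with \eqref{eq:no-sim-jump} imply $\sup_{s\le t}|\Delta\bar M^N(s)|\le 1/\sqrt N$ almost surely, so the Lindeberg-type jump condition is trivial.

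Finally, the martingale CLT then yields weak convergence of $\bar M^N$ in $D(\mathbb R_+,\mathbb R)$ to the unique continuous centred Gaussian martingale with quadratic variation $m(t)=\int_0^t\lambda(s)\,ds$, which (via a time-change representation) has the same law as $W_\lambda$. I don't expect a real obstacle here: Steps 1, 4 and 5 are mechanical, and the crucial Step 3 (convergence of $\langle\bar M^N\rangle$) is essentially for free from the already proven Proposition~\ref{L2-bdd_X}. The only point one has to be slightly careful about is justifying the orthogonality of the predictable brackets from the no-joint-jumps property, but this is standard for counting processes with continuous compensators.
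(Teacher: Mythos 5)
Your argument is correct and rests on the same pillar as the paper's proof, namely the functional martingale CLT with the two standard hypotheses (vanishing jumps, convergence of the bracket to $m(t)=\int_0^t\lambda(s)\,ds$); the jump bound $|\Delta\bar M^N|\le 1/\sqrt N$ via \eqref{eq:no-sim-jump} is verbatim what the paper does. The one genuine difference is how the bracket condition is verified. The paper works with the \emph{optional} quadratic variation $[\bar M^N]_t=\frac1N\sum_{i=1}^N Z_i^N(t)$ and invokes the law of large numbers \eqref{eq:LLN} (Proposition \ref{LLN}), which in turn rests on the external propagation-of-chaos estimate of Delattre et al. You instead work with the \emph{predictable} bracket $\langle\bar M^N\rangle_t=\Lambda^N(t)$ (the orthogonality argument via \eqref{eq:no-sim-jump} and the continuity of the compensator is sound) and control $|\Lambda^N(t)-m(t)|=|A^N(t)|/\sqrt N\le \|f'\|_\infty N^{-1/2}\int_0^t|X^N(s)|\,ds$ by the a priori bound of Proposition \ref{L2-bdd_X}. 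This buys you two things: the argument is internal to the paper (no appeal to the coupling construction behind \eqref{eq:L1-Cvg-HP}), and it yields an explicit $O(N^{-1/2})$ rate in $L^2$ for $\Lambda^N(t)-m(t)$ rather than mere convergence in probability; the price is that you must check that the version of the martingale CLT you cite accepts the predictable bracket (both Ethier--Kurtz and Jacod--Shiryaev do, given the uniformly bounded jumps, so this is only a bookkeeping point). Both routes are valid.
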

\begin{proof}
The generalized martingale CLT, e.g. see \cite[Chapter 7, Theorem 1.4]{EK}, yields the assertion if the martingale sequence $\bar{M}^{N}$, $N\in\mathbb{N}$, satisfies
\begin{equation*}
    \norm{\Delta\bar{M}^{N}}_{T}\xrightarrow[N\to\infty]{L^{1}}0\;, \quad T>0\;,
\end{equation*}
and 
\begin{equation*}
    \left\lbrack\bar{M}^{N}\right\rbrack_{t}\xrightarrow[N\to\infty]{\mathbb{P}}\int\limits_{0}^{t}
    \lambda(s)\;ds\;,\quad t\geq0\;.
\end{equation*}
In order to see that $\bar{M}^{N}$, $N\in\mathbb{N}$, indeed satisfies both of above conditions, firstly 
observe 
\begin{equation*}
    \left|\Delta\bar{M}^{N}(t)\right|=\frac{1}{\sqrt{N}}\sum\limits_{i=1}^{N}\Delta Z^{N}_{i}(t)
    \overset{\text{a.s.}}{\leq}\frac{1}{\sqrt{N}}\;,\quad t>0\;,\;N\in\mathbb{N}\;,
\end{equation*}
since each $Z^{N}_{i}$ is a counting process with continuous cumulative intensity $t\mapsto\Lambda^{N}(t)$ 
and $Z^{N}$ satisfies \eqref{eq:no-sim-jump}. Thus, it follows 
\begin{equation*}
    \mathbb{E}\left\lbrack\norm{\Delta\bar{M}^{N}}_{T}\right\rbrack\leq\frac{1}{\sqrt{N}}
    \xrightarrow{N\to\infty}0\;,\quad T>0\;.
\end{equation*}
As derived in the proof of Proposition \ref{L2-bdd_X} it holds:
\begin{equation*}
    \left\lbrack\bar{M}^{N}\right\rbrack_{t}\overset{\text{a.s.}}{=} 
    \frac{1}{N}\sum\limits_{i=1}^{N}Z^{N}_{i}(t)\;,\quad t\geq0\;,\;N\in\mathbb{N}\;,
\end{equation*}
such that \eqref{eq:LLN} yields 
\begin{align*}
    \left\lbrack\bar{M}^{N}\right\rbrack_{t}-m(t)\overset{\text{a.s.}}{=} 
    \frac{1}{N}\sum\limits_{i=1}^{N}\left(Z^{N}_{i}(t)-m(t)\right)\xrightarrow[N\to\infty]{L^{1}}0\; 
     \quad t\geq0\;, 
\end{align*}
for $m$ being as in \eqref{eq:Def-m}. Since $m(t) = \int_0^t \lambda (s)\, ds$ by \eqref{eq:Def-gamma} 
this implies the assertion.  
\end{proof}

\section{Conclusion and Outlook}\label{Conclusion}

\noindent 
Our results can be applied to the following construction of a second order approximation of $Z^{N}$: 
Consider any probability space $\left(\Omega,\mathcal{F},\mathbb{P}\right)$ on which there exist a 
standard Brownian motion $W$ and Poisson random measures $\pi_{i}$, $i\in\mathbb{N}$, with intensity 
measure $dzds$ on $\mathbb{R}_{+}\times\mathbb{R}_{+}$ such that $W$ and $\pi_{i}$, $i\in\mathbb{N}$, are 
all independent. Assume that the pair of parameters $(f,h)$ satisfies (A). 

We then solve \eqref{eq:Def-m} to obtain $m$ and its derivative $\lambda$. We can then construct the 
rescaled Brownian motion $W_{\lambda}$ from $W$ (see \eqref{eq:Def-W_gamma}) and subsequently, use 
$W_{\lambda}$ to construct
\begin{equation}
\sigma=\Xi\circ\Psi\circ\Phi\left(W_{\lambda}\right)\;,
\end{equation}
where the operators $\Phi$, $\Psi$ and $\Xi$ are defined in 
\eqref{eq:Def-Phi}, \eqref{eq:Def-Psi} and \eqref{eq:Def-Xi} 
respectively. Instead of using the resolvent kernel $K$ in 
\eqref{eq:Def-Phi}, one can also simply solve the stochastic 
integral equation \eqref{eq:SDE-G_X} for $G_X$ and subsequently 
obtain $\sigma (t) = f^\prime (x_t) G_X (t)$ according to 
\eqref{eq:Def-sigma}. 

For fixed $N$ we can then construct the asymptotic intensity 
\begin{equation}
\widehat{\lambda}^{N} = \lambda + \frac{\sigma}{\sqrt{N}}\;, 
\end{equation} 
and subsequently define 
\begin{equation}
\widehat{Z}^{N}_{i}(t)=\int\limits_{0}^{t}\pi_{i}\left(\left\lbrack0,\widehat{\lambda}^{N}(s) 
\right\rbrack \times ds\right)\;,\hspace{.5cm}t\ge 0,\;1\leq i\leq N\;.  
\end{equation}
Since $\widehat{\lambda}^{N}$ is independent of $\pi_{i}$, $i\in\mathbb{N}$, each $\widehat{Z}^{N}_{i}$ is 
an inhomogeneous Cox process (or doubly stochastic Poisson process) with stochastic intensity 
$\widehat{\lambda}^{N}$. It is easy to verify that there exists a sequence of iid Poisson processes 
$\Pi_{i}$, $i\in\mathbb{N}$, with intensity $\lambda$ such that 
\begin{equation} 
\label{eq:L1-Cvg-Approx}
\mathbb{E}\left\lbrack\norm{\widehat{Z}^{N}_{i}-\Pi_{i}}_{T}\right\rbrack 
\le\frac{T}{\sqrt{N}}\mathbb{E}\left\lbrack\norm{\sigma}_{T}\right\rbrack 
\xrightarrow{N\to\infty}0\;\hspace{1cm}T>0\;,\;i\in\mathbb{N}\;.
\end{equation}
Now, consider a Hawkes process $Z^{N}$ with parameters $(f,h,N)$ constructed as the pathwise unique 
solution of \eqref{eq:HP}, not necessarily sharing the same probability space as that of 
$\widehat{Z}^N$. Theorem \eqref{CLT3} implies that the distribution of its intensity process $\lambda^{N}$ 
can be approximated by $\widehat{\lambda}^{N}$. Moreover, in view of \eqref{eq:L1-Cvg-HP} and 
\eqref{eq:L1-Cvg-Approx}, $Z^{N}$ and $\widehat{Z}^{N}$ have the same behaviour in the large population 
limit. In conclusion, the system of Cox processes $\widehat{Z}^{N}$ approximates the behavior of the 
Hawkes process $Z^{N}$. However, a simulation study is necessary to evaluate the performance of 
$\widehat{Z}^{N}$, especially in comparison to an approximating system of simple Poisson processes. 
From a theoretical point of view approximating the behavior of the Hawkes process $Z^{N}$ by the system of 
Cox processes $\widehat{Z}^{N}$ should be superior to an approximation via a system of Poisson processes, 
because $\widehat{Z}^{N}$ maintains a random intensity process. This is especially desirable considering 
the Hawkes process $Z^{N}$ as a mathematical model for the statistical analysis of biological neural 
networks. 

\medskip 
\noindent 
\textbf{Numerical illustration} 

\noindent 
Consider as numerical illustration the 1st and 2nd order 
approximations $\lambda$ and $\hat{\lambda}^N$ of $\lambda^N$ in the 
case of a Hawkes network with $N= 50$ neurons, with synaptic kernel 
$h(t) = \beta^2 te^{- \beta t}$ for $\beta > 0$ and rate function 
$f(x) = \frac{1}{1 + e^{-\gamma (x-0.5)}}$ for $\gamma > 0$. 
Since the limiting intensity satisfies  
$$ 
\lambda (t) = f\left( \int_0^t h(t-s) \lambda (s)\, ds\right) 
$$ 
(cf. \eqref{eq:Def-gamma}), any asymptotically stable value  
$\lambda$ has to satisfy the equation $\lambda  = f \left( 
\int_0^\infty h(s) \lambda \, ds \right) = f(\lambda )$.
Depending on the number and stability properties of the fix 
points $f(x) = x$, we can now distinguish three different 
regimes for 
the dynamical behavior of the network. In the case $\gamma \le 4$ 
the only fix point is $x_* = 0.5$, which is globally asymptotically 
stable for $\gamma < 0.5$. Beyond the critical case 
$\gamma = 0.5$ $x_*$ becomes unstable and two other stable fix  
points $0 < x_- < 0.5 < x_+ < 1$ arise. 

\medskip 
\centerline{
\includegraphics[scale=0.4]{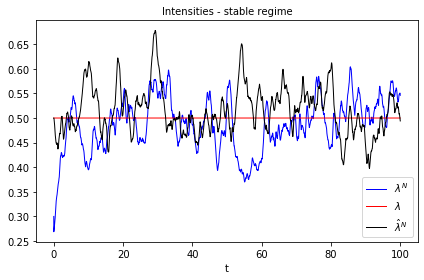}
\hspace{0.5cm}
\includegraphics[scale=0.4]{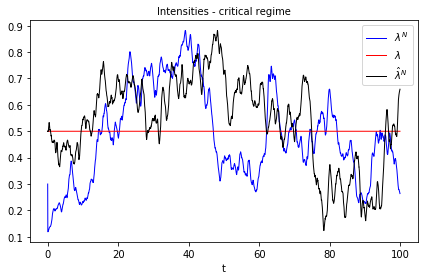}
}

\centerline{
\includegraphics[scale=0.4]{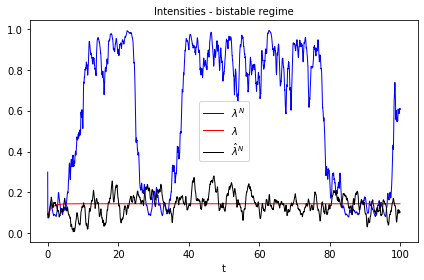}
\hspace{0.5cm}
\includegraphics[scale=0.4]{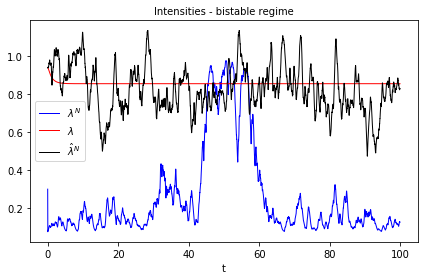}
}

\noindent 
The simulations indicate that fluctuations of $\hat{\lambda}^N$ and 
$\lambda^N$ are of the same order in the stable (with $\gamma = 2$,  
$\beta = 2$) and the critical regime (with $\gamma = 4$, 
$\beta = 2$), improving the trivial 1st order approximation given 
by $\lambda$. In the bistable regime (with $\gamma = 5$ and  
$\beta = 4$) however, a satisfactory approximation of $\lambda^N$ 
is only obtained around one of the stable fix points. 

\subsection*{Acknowlegdement} 
We thank two referees for several instructive comments. 
This work has been funded by Deutsche Forschungsgemeinschaft (DFG) through grant CRC 910 "Control of self-organizing nonlinear systems: Theoretical methods and concepts of application", Project (A10) "Control of stochastic mean-field equations with applications to brain networks". 

\bibliographystyle{amsalpha}
\bibliography{bibliography}

\appendix

\section{\, }\label{appendixVt}

\subsection{Law of Large Numbers}

\begin{proposition} 
\label{LLN}
Assume that the pair of parameters $(f,h)$ satisfies (A). Then,
\begin{equation*}
\norm{\frac{1}{N}\sum\limits_{i=1}^{N}\left(Z^{N}_{i}-m\right)}_{T}\xrightarrow[N\to\infty]{L^{1}}0\;,\hspace{1cm}T>0\;,
\end{equation*}
where $m$ is the unique solution of 
\begin{equation*}
    m(t)=\int\limits_{0}^{t}f\left(\int\limits_{0}^{s}h(s-u)\;dm(u)\right)\;ds\;,\hspace{1cm}t\geq0\;.
\end{equation*}
\end{proposition}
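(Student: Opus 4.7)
The plan is to exploit the propagation of chaos estimate \eqref{eq:L1-Cvg-HP} of Delattre et al.\ already quoted in the introduction, combined with a standard $L^{2}$ martingale argument for the i.i.d.\ coupling processes $\bar{Z}_{i}$. By \eqref{eq:L1-Cvg-HP} there exist, on some (possibly enlarged) probability space, i.i.d.\ inhomogeneous Poisson processes $\bar{Z}_{i}$ with common compensator $m$ such that $\mathbb{E}[\|Z^{N}_{i}-\bar{Z}_{i}\|_{T}]\le C(T)/\sqrt{N}$. A triangle inequality then gives
\begin{equation*}
\left\lVert\frac{1}{N}\sum_{i=1}^{N}(Z^{N}_{i}-m)\right\rVert_{T}
\le \frac{1}{N}\sum_{i=1}^{N}\left\lVert Z^{N}_{i}-\bar{Z}_{i}\right\rVert_{T}
   + \left\lVert\frac{1}{N}\sum_{i=1}^{N}(\bar{Z}_{i}-m)\right\rVert_{T}.
\end{equation*}

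For the first term on the right, taking expectations and applying \eqref{eq:L1-Cvg-HP} term by term directly yields a bound of order $C(T)/\sqrt{N}$, which tends to $0$. So the first term converges to $0$ in $L^{1}$.

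For the second term, I would observe that each $\bar{Z}_{i}-m$ is a square-integrable c\`adl\`ag martingale (since $m$ is the continuous compensator of $\bar{Z}_{i}$), so $M^{N}:=\frac{1}{N}\sum_{i=1}^{N}(\bar{Z}_{i}-m)$ is also a martingale. By independence of the $\bar{Z}_{i}$ and the fact that each has predictable quadratic variation $\langle \bar{Z}_{i}\rangle_{t}=m(t)$, one obtains
\begin{equation*}
\mathbb{E}\bigl[M^{N}(T)^{2}\bigr] \;=\; \frac{1}{N^{2}}\sum_{i=1}^{N}\mathbb{E}\bigl[(\bar{Z}_{i}(T)-m(T))^{2}\bigr] \;=\; \frac{m(T)}{N}.
\end{equation*}
Doob's $L^{2}$-inequality then gives $\mathbb{E}[\|M^{N}\|_{T}^{2}]\le 4m(T)/N$, and Jensen's inequality turns this into $\mathbb{E}[\|M^{N}\|_{T}]\le 2\sqrt{m(T)/N}\to 0$.

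Combining the two estimates yields the claimed $L^{1}$ convergence. There is no real obstacle here: the only conceptual point is that one should not try to prove the LLN by directly exploiting the interacting processes $Z_{i}^{N}$ (which are neither independent nor martingales once $m$ is subtracted), but rather couple them to the i.i.d.\ Poisson processes $\bar{Z}_{i}$ via Delattre et al.'s estimate and apply the classical martingale LLN to the clean i.i.d.\ system.
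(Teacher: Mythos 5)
Your proof is correct and follows essentially the same route as the paper's: couple $Z^N_i$ to the i.i.d.\ Poisson processes $\bar{Z}_i$ via the Delattre et al.\ estimate \eqref{eq:L1-Cvg-HP}, then control $M^N=\frac1N\sum_i(\bar{Z}_i-m)$ by the variance computation $\mathbb{E}[M^N(T)^2]=m(T)/N$ together with Doob's and Jensen's inequalities. No differences of substance.
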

\begin{proof}
The assertion easily follows from \cite[Theorem 8]{DelattreEtAl}. Delattre et. al. particularly show that, 
for any $N\in\mathbb{N}$, there exists a sequence of iid Poisson processes $\bar{Z}_{i}$,  
$1\leq i\leq N$, with cumulative intensity $m$ such that 
\begin{equation*}
    \mathbb{E}\left\lbrack\norm{Z^{N}_{i}-\bar{Z}_{i}}_{T}\right\rbrack 
    \leq\frac{C(T)}{\sqrt{N}}\;,\quad T>0\;,\;1\leq i\leq N\;,\;N\in\mathbb{N}\;, 
\end{equation*}
for some constant $t\mapsto C(t)$. Hence, 
\begin{equation*}
    \mathbb{E}\left\lbrack\norm{\frac{1}{N}\sum\limits_{i=1}^{N}\left(Z^{N}_{i}-m\right)}_{T}
    \right\rbrack\leq\frac{C(T)}{\sqrt{N}}+\mathbb{E}\left\lbrack\norm{M^{N}}_{T}\right\rbrack\;, 
    \quad T>0\;,\;N\in\mathbb{N}\;,
\end{equation*}
by the triangle inequality, where 
\begin{equation*}
    M^{N}=\frac{1}{N}\sum\limits_{i=1}^{N}\left(\bar{Z}_{i}-m\right)\;,\hspace{1cm}N\in\mathbb{N}\;
\end{equation*}
is a martingale with 
\begin{equation*}
\mathbb{E}\left\lbrack\left( M^{N} (T)\right)^2 \right\rbrack 
= \text{Var}\left(\frac{1}{N}\sum\limits_{i=1}^{N}\bar{Z}_{i}(t)\right) 
= \frac{m(t)}{N}\;,\quad t\ge 0\;,\;N\in\mathbb{N}\;,
\end{equation*}
such that applying Jensen's inequality and Doob's inequality we obtain for $T>0$ and 
$N\in\mathbb{N}$ that 
\begin{equation*} 
\begin{aligned} 
\mathbb{E}\left\lbrack \norm{M^{N}}_{T}\right\rbrack^2  
& \le 4\mathbb{E}\left\lbrack\left( M^{N} (T)\right)^2 \right\rbrack  
\le 4 \frac{m(T)}{N} , 
\end{aligned} 
\end{equation*}
which implies the assertion. 
\end{proof}

\subsection{Skorokhod continuity of convolution integrals} 

\begin{proposition}
\label{Operator-Estimation}
Let $g:\mathbb{R}_{+}\times\mathbb{R}_{+}\to\mathbb{R}$ be measurable and suppose that for all $T > 0$ 
there exists some constant $C_T$ such that 
\begin{equation*}
\begin{aligned} 
& \hphantom{qi}\sup\limits_{0\le s\le t\le T} \left| g(t,s) \right| \le C_T \; , \\ 
& \sup\limits_{0\le s\le t_1 \le t_2\le T} \left|g(t_{1},s)-g(t_{2},s)\right| 
  \le C_T \left|t_{1}-t_{2}\right| \; . 
\end{aligned} 
\end{equation*}
Then the integral operator 
\begin{equation*}
    \Psi_{g}: D\left(\mathbb{R}_{+},\mathbb{R}\right)\to D\left(\mathbb{R}_{+},\mathbb{R}\right)\;,\hspace{1cm} 
    \Psi_{g}(F)(t) = \int\limits_{0}^{t} g(t,s)F(s)\;ds 
\end{equation*}
is well-defined and satisfies 
\begin{align*}
    \norm{\Psi_{g}(F)-\Psi_{g}(G)\circ\gamma}_{T} 
    & \le 2C_T \left(\norm{F}_{L^{1}\lbrack0,T\rbrack} + \norm{G}_{L^{1}\lbrack0,T\rbrack}\right)
          \norm{id-\gamma}_{T} \\ 
       & \hspace{1.5cm} + C_T \norm{F-G}_{L^1 [0,T]} \, . 
\end{align*}
\end{proposition}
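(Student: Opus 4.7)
The strategy is to estimate, pointwise for each $t\in[0,T]$, the difference $\Psi_g(F)(t)-\Psi_g(G)(\gamma(t))$ by inserting an intermediate pivot integral, exploiting both hypotheses on $g$: its uniform sup bound $C_T$ and its Lipschitz continuity in the first argument. Well-definedness of $\Psi_g$ as a map from $D(\mathbb{R}_+,\mathbb{R})$ into itself (indeed into $C(\mathbb{R}_+,\mathbb{R})$) will follow as a by-product of the same type of splitting: for $0\le t_1<t_2\le T$,
\begin{equation*}
\Psi_g(F)(t_2)-\Psi_g(F)(t_1) = \int_{t_1}^{t_2} g(t_2,s) F(s)\, ds + \int_0^{t_1}[g(t_2,s)-g(t_1,s)]F(s)\, ds,
\end{equation*}
and both pieces tend to $0$ as $t_2\downarrow t_1$ by boundedness of $F$ on compacts and the hypotheses on $g$.

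For the main estimate, I fix $t\in[0,T]$ and, without loss of generality, assume $\gamma(t)\ge t$ (the reverse case is symmetric). Inserting the pivot $\int_0^t g(\gamma(t),s)F(s)\,ds$ decomposes the difference as
\begin{align*}
\Psi_g(F)(t)-\Psi_g(G)(\gamma(t))
&= \int_0^t [g(t,s)-g(\gamma(t),s)] F(s)\, ds \\
&\quad + \int_0^t g(\gamma(t),s)[F(s)-G(s)]\, ds \\
&\quad - \int_t^{\gamma(t)} g(\gamma(t),s) G(s)\, ds.
\end{align*}
The Lipschitz hypothesis combined with $|\gamma(t)-t|\le\|\mathrm{id}-\gamma\|_T$ controls the first piece by $C_T\,\|\mathrm{id}-\gamma\|_T\,\|F\|_{L^1[0,T]}$, and the sup bound on $g$ controls the second by $C_T\,\|F-G\|_{L^1[0,T]}$.

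The main obstacle — and the step requiring the most careful bookkeeping — is the boundary term $\int_t^{\gamma(t)} g(\gamma(t),s) G(s)\, ds$. The sup bound on $g$ reduces it to controlling $\int_t^{\gamma(t)}|G(s)|\,ds$ on an interval whose length is at most $\|\mathrm{id}-\gamma\|_T$, and combining this with the estimate on the first piece, symmetrizing over $\gamma(t)\gtreqless t$, and taking $\sup_{t\in[0,T]}$ collects the claimed factor $2 C_T(\|F\|_{L^1[0,T]}+\|G\|_{L^1[0,T]})\,\|\mathrm{id}-\gamma\|_T$ alongside the $C_T\,\|F-G\|_{L^1[0,T]}$ term. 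The Skorokhod continuity of $\Psi_g$ asserted in the subsequent corollary is then immediate, since taking $F=G$ yields $\|\Psi_g(F)-\Psi_g(F)\circ\gamma\|_T\le 4 C_T\|F\|_{L^1[0,T]}\|\mathrm{id}-\gamma\|_T$, which vanishes as $d_S^T(F_n,F)\to 0$.
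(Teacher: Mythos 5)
Your decomposition is the same one the paper uses: insert a pivot so that $\Psi_g(F)(t)-\Psi_g(G)(\gamma(t))$ splits into a term controlled by the Lipschitz continuity of $g(\cdot,s)$, a term controlled by $\norm{F-G}_{L^1[0,T]}$, and a boundary integral over $[t\wedge\gamma(t),\,t\vee\gamma(t)]$. The first two estimates are fine, and your argument for well-definedness (continuity of $t\mapsto\Psi_g(F)(t)$) is also fine.

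The gap is exactly in the step you yourself flag as requiring ``the most careful bookkeeping'', namely the boundary term. You assert that
\begin{equation*}
\int_t^{\gamma(t)}\left|g(\gamma(t),s)G(s)\right|ds
\end{equation*}
contributes to the factor $2C_T\left(\norm{F}_{L^1[0,T]}+\norm{G}_{L^1[0,T]}\right)\norm{id-\gamma}_T$, but an integral of $|G|$ over a short interval is \emph{not} controlled by $\norm{G}_{L^1[0,T]}$ times the length of that interval: a c\`adl\`ag $G$ consisting of a tall narrow spike inside $[t,\gamma(t)]$ satisfies $\int_t^{\gamma(t)}|G(s)|\,ds=\norm{G}_{L^1[0,T]}$ no matter how small $\gamma(t)-t$ is, so no inequality of the form $\int_t^{\gamma(t)}|G|\le c\,\norm{G}_{L^1[0,T]}\norm{id-\gamma}_T$ can hold. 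The only available bound here is $\int_t^{\gamma(t)}|G(s)|\,ds\le\norm{G}_T\,|\gamma(t)-t|\le\norm{G}_T\norm{id-\gamma}_T$ with the \emph{supremum} norm, and this is in fact what the paper's own proof uses (it bounds the $\norm{id-\gamma}_T$-terms by $C_T(\norm{F}_T+\norm{G}_T)\norm{id-\gamma}_T$; the $L^1$-norms appearing in the displayed statement of the proposition are inconsistent with its own derivation, and the downstream applications in Corollary \ref{CLT2} and Corollary \ref{Cont_Phi} only ever need the sup-norm version, since Skorokhod-convergent sequences are uniformly bounded on compacts). So as written your derivation does not close; replacing $\norm{F}_{L^1[0,T]}+\norm{G}_{L^1[0,T]}$ by $\norm{F}_T+\norm{G}_T$ in the $\norm{id-\gamma}_T$-term repairs both your argument and the statement.
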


\begin{proof}
Clearly, the integral $\Psi_g (F)(t)$ is well-defined for all $t$ and in fact continuous, hence in 
particular in $D\left(\mathbb{R}_{+},\mathbb{R}\right)$. Now, fix $T > 0$, $\gamma\in\Gamma_{T}$ and 
let $t\in\lbrack0,T\rbrack$. Then, for any $F,\,G\in D\left(\mathbb{R}_{+},\mathbb{R}\right)$ it holds 
\begin{align*} 
\left|\Psi_{g}(F)(t)\right.&-\left.\Psi_{g}(G)\circ\gamma(t)\right|
    =\left|\int\limits_{0}^{t}g(t,s)F(s)\;ds-\int\limits_{0}^{\gamma(t)}g(\gamma(t),s)F(s)\;ds\right|\\
    & \le \int\limits_{0}^{t\wedge\gamma(t)}\left|g(t,s)F(s)-g(\gamma(t),s)G(s)\right|\;ds\\
    & \hspace{2cm}+\int\limits_{t\wedge\gamma(t)}^{t}\left|g(t,s)F(s)\right|\;ds 
      +\int\limits_{t\wedge\gamma(t)}^{\gamma(t)}\left|g(\gamma(t),s)G(s)\right|\;ds\, . 
\end{align*}
We can now further bound all three terms from above as follows 
\begin{align*}
& \int_0^{t\wedge\gamma(t)}\left|g(t,s)F(s) 
  - g(\gamma(t),s)G(s)\right| ds \\
& \le\int\limits_{0}^{t\wedge\gamma(t)}\left|\left(g(t,s)-g(\gamma(t),s)\right)F(s)\right| ds 
+ \int\limits_{0}^{t\wedge\gamma(t)} \left|g(\gamma(t),s) 
 \left(F-G\right)(s)\right|ds \\
    & \le C_T \left( \norm{F}_{T}\norm{id-\gamma}_{T} + \norm{F-G}_{L^{1}\lbrack0,T\rbrack}\right) 
\end{align*}
and
\begin{align*}
    \int\limits_{t\wedge\gamma(t)}^{t}\left|g(t,s)F(s)\right|\;ds 
    & + \int\limits_{t\wedge\gamma(t)}^{\gamma(t)}\left|g(\gamma(t),s)G(s)\right|\;ds\nonumber\\
    & \hspace{2cm} \le C_T \left(\norm{F}_{T}+\norm{G}_{T}\right)\norm{id-\gamma}_{T}  
\end{align*}
which implies the assertion.  
\end{proof}

\begin{corollary}
\label{Cont_Phi} 
Let $g$ be as in Proposition \ref{Operator-Estimation}. Then the convolution operator 
\begin{equation*}
    \Phi_{g}: D\left(\mathbb{R}_{+},\mathbb{R}\right)\to D\left(\mathbb{R}_{+},\mathbb{R}\right) 
     \;,\quad 
    \Phi_{g}(F)(t) = \int\limits_{0}^{t} g(t,s)F(s)\;ds + F(t) 
\end{equation*}
is Skorokhod continuous.
\end{corollary}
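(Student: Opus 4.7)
The plan is to establish Skorokhod continuity of $\Phi_g = \Psi_g + \mathrm{Id}$ by reusing, for the output, the same time-change that witnesses Skorokhod convergence of the input, and then invoking the quantitative estimate of Proposition~\ref{Operator-Estimation}. Fix $T>0$ and suppose $F_n \to F$ in $d_S^T$, so there exist $\gamma_n\in\Gamma_T$ with $\norm{\gamma_n - id}_T \to 0$ and $\epsilon_n := \norm{F - F_n\circ\gamma_n}_T \to 0$. I would show that $\norm{\Phi_g(F) - \Phi_g(F_n)\circ\gamma_n}_T \to 0$, which combined with $\norm{\gamma_n - id}_T \to 0$ yields $d_S^T(\Phi_g(F),\Phi_g(F_n)) \to 0$; since $T$ is arbitrary, this gives continuity for $d_S$.

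Using $\Phi_g(F) = \Psi_g(F) + F$ and the triangle inequality,
$$\norm{\Phi_g(F) - \Phi_g(F_n)\circ\gamma_n}_T \le \norm{\Psi_g(F) - \Psi_g(F_n)\circ\gamma_n}_T + \epsilon_n,$$
so it suffices to control the $\Psi_g$-term. Proposition~\ref{Operator-Estimation} bounds it by
$$2 C_T \left(\norm{F}_{L^1[0,T]} + \norm{F_n}_{L^1[0,T]}\right) \norm{id - \gamma_n}_T + C_T \norm{F - F_n}_{L^1[0,T]}.$$
The $L^1$-norms $\norm{F_n}_{L^1[0,T]}$ are uniformly bounded because $\gamma_n$ is a bijection of $[0,T]$, hence $\norm{F_n}_T = \norm{F_n\circ\gamma_n}_T \le \norm{F}_T + \epsilon_n$. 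So the first summand vanishes together with $\norm{id-\gamma_n}_T$.

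The main obstacle I foresee is showing $\norm{F - F_n}_{L^1[0,T]} \to 0$, because the Skorokhod-type uniform bound only controls $F_n$ after composition with $\gamma_n$. My plan is to substitute $u = \gamma_n(s)$ in the inequality $\norm{F - F_n\circ\gamma_n}_T \le \epsilon_n$ to get $\norm{F\circ\gamma_n^{-1} - F_n}_T \le \epsilon_n$ (noting $\norm{\gamma_n^{-1} - id}_T = \norm{\gamma_n - id}_T$ since $\gamma_n$ is a strictly increasing continuous bijection of $[0,T]$), and then apply the triangle inequality
$$\norm{F_n - F}_{L^1[0,T]} \le T\,\epsilon_n + \norm{F\circ\gamma_n^{-1} - F}_{L^1[0,T]}.$$
The remaining integral vanishes by dominated convergence: $F$ is bounded on $[0,T]$, $\gamma_n^{-1}(u) \to u$ uniformly, and at each continuity point of $F$ one has $F(\gamma_n^{-1}(u)) \to F(u)$; since the discontinuity set of the c\`adl\`ag function $F$ is at most countable, hence Lebesgue-null, convergence holds almost everywhere on $[0,T]$.

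Putting the pieces together gives $\norm{\Phi_g(F) - \Phi_g(F_n)\circ\gamma_n}_T \to 0$, from which Skorokhod continuity of $\Phi_g$ on $D(\mathbb{R}_+,\mathbb{R})$ follows.
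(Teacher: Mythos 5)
Your proposal is correct and follows essentially the same route as the paper's proof: apply the quantitative estimate of Proposition \ref{Operator-Estimation} along the time changes witnessing the Skorokhod convergence, and reduce the remaining $L^1$-term via the triangle inequality to $\norm{F\circ\gamma_n^{-1}-F}_{L^1[0,T]}$, which vanishes since c\`adl\`ag functions are bounded and a.e.\ continuous. The only differences are cosmetic (you compose the time change with $F_n$ rather than $F$, and you spell out the dominated convergence step that the paper leaves implicit).
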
 

\begin{proof}
Take $F,F_{1},F_{2},\ldots\in D\left(\mathbb{R}_{+},\mathbb{R}\right)$ such that $F_{n}\to F$ as 
$n\to\infty$ in $d_{S}$. Fix $T>0$. Then there exists $\left(\gamma^{T}_{n}\right)_{n\in\mathbb{N}}
\subset\Gamma_{T}$ such that 
\begin{equation*}
\norm{id-\gamma^{T}_{n}}_{T}\xrightarrow{n\to\infty}0\;, \quad  
\norm{F_{n}-F\circ\gamma^{T}_{n}}_{T}\xrightarrow{n\to\infty}0\;.
\end{equation*}
The previous Proposition \ref{Operator-Estimation} now implies that 
\begin{align*}
&\norm{\Phi_g \left(F_{n}\right)-\Phi_g \left(F\right)\circ\gamma^{T}_{n}}_{T} 
\le 2C_T \left(\norm{F_{n}}_{L1 [0,T]} + \norm{F}_{L^1 [0,T]}\right)  
  \norm{id-\gamma^{T}_{n}}_T \\
& \hspace{4.5cm} +C_T \norm{F_{n}-F}_{L^1 \lbrack0,T\rbrack} + \norm{F_{n}-F\circ\gamma^{T}_{n}}_{T}
\rightarrow 0  
\end{align*}
as $n\to\infty$, since 
\begin{equation*}
\begin{aligned} 
&\norm{F_{n} - F}_{L^1 [0,T]} 
\le \norm{F_{n} - F\circ\gamma_n^T}_{L^1 [0,T]} + \norm{F\circ\gamma_n^T- F}_{L^1 [0,T]} \\ 
& \hspace{2cm} 
\le \norm{F_{n} - F\circ\gamma_n^T}_{T} T + \norm{F\circ\gamma_n^T- F}_{L^1 [0,T]} \to 0 
\, , n\to\infty\, . 
\end{aligned} 
\end{equation*}
Therefore $\lim_{n\to\infty} d^{T}_{S}\left(\Phi_g \left(F_{n}\right),\Phi_g \left(F\right)\right) = 0$, 
which implies the assertion, since $T$ was arbitrary.  
\end{proof}

\subsection{Properties of the resolvent kernel K} 
\label{Prop_K}

Recall the definition \eqref{eq:Def-resolvent} of the resolvent $K(t,s)$. We first note the following uniqueness result: 

\begin{lemma} 
\label{Sol-asso-SIE}
Let $F$ and $G$ be locally integrable with 
\begin{equation} 
\label{eq:Def-V}
    G (t) =  \int\limits_{0}^{t}\kappa(t,s)G(s)\;ds + F(t) \;,\quad t\ge 0 . 
\end{equation}
Then $G = \Phi \left(F\right)$. 
\end{lemma}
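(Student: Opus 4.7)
The plan is to iterate the Volterra equation \eqref{eq:Def-V} and identify the limit of the resulting Neumann-type series with $\Phi(F)$. Substituting \eqref{eq:Def-V} into itself once and applying Fubini gives
\[
G(t) = F(t) + \int_0^t \kappa(t,s) F(s)\, ds + \int_0^t \kappa^{\otimes 2}(t,s) G(s)\, ds,
\]
where the associativity identity $\int_0^t \kappa(t,s)\int_0^s \kappa(s,u) G(u)\, du\, ds = \int_0^t \kappa^{\otimes 2}(t,u) G(u)\, du$ follows from \eqref{eq:Def-kappa_otimes} after swapping the order of integration. Iterating $n$ times yields
\[
G(t) = F(t) + \sum_{k=1}^{n} \int_0^t \kappa^{\otimes k}(t,s) F(s)\, ds + R_n(t), \qquad R_n(t) := \int_0^t \kappa^{\otimes (n+1)}(t,s) G(s)\, ds.
\]

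The next step is to pass to the limit $n\to\infty$ on a fixed interval $[0,T]$. Using the bound $|\kappa^{\otimes n}(t,s)| \le M_T^n (t-s)^{n-1}/(n-1)!$ from the excerpt, the remainder satisfies
\[
|R_n(t)| \le \frac{M_T^{n+1} T^{n}}{n!} \int_0^T |G(s)|\, ds,
\]
which converges to $0$ uniformly in $t\in[0,T]$ since $G$ is locally integrable. At the same time, the same bound shows that the series $\sum_{k\ge 1} \kappa^{\otimes k}(t,s)$ converges absolutely and uniformly on $\{0\le s\le t\le T\}$ to $K(t,s)$, so dominated convergence gives
\[
\sum_{k=1}^{n} \int_0^t \kappa^{\otimes k}(t,s) F(s)\, ds \xrightarrow{n\to\infty} \int_0^t K(t,s) F(s)\, ds.
\]
Combining these two convergences with the iterated identity yields $G(t) = F(t) + \int_0^t K(t,s) F(s)\, ds = \Phi(F)(t)$, as required.

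The only nontrivial bookkeeping is the inductive Fubini step showing that repeated convolutions produce exactly $\kappa^{\otimes n}$ as defined in \eqref{eq:Def-kappa_otimes}; this is straightforward thanks to the boundedness of the kernels on triangles $\{0\le s\le t\le T\}$, which justifies all order-of-integration swaps. I do not anticipate any genuine obstacle: the proof is entirely deterministic, and the decisive estimate is the factorial decay of $\kappa^{\otimes n}$ already recorded in the excerpt, which simultaneously drives the convergence of the series defining $K$ and the vanishing of the remainder.
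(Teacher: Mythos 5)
Your proof is correct, but it takes a genuinely different route from the paper's. You run the classical Neumann/Picard iteration: substitute the equation into itself $n$ times, identify the iterated kernels with $\kappa^{\otimes k}$ via Fubini, and kill the remainder $R_n$ using the factorial bound $|\kappa^{\otimes(n+1)}(t,s)|\le M_T^{n+1}(t-s)^n/n!$ together with the local integrability of $G$. The paper instead verifies the identity in one step: it writes $\Phi(F)(t)-G(t)=\int_0^t K(t,s)F(s)\,ds-\int_0^t\kappa(t,s)G(s)\,ds$, substitutes $F(s)=G(s)-\int_0^s\kappa(s,u)G(u)\,du$ into the first integral, applies Fubini once, and concludes via the resolvent identity $\int_u^t K(t,s)\kappa(s,u)\,ds=K(t,u)-\kappa(t,u)$ (which the paper quotes without proof; it is itself obtained by the same termwise integration of the series defining $K$ that powers your argument). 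The trade-off: the paper's proof is shorter but outsources the combinatorial content to the resolvent identity, whereas yours is self-contained, makes explicit where the hypothesis that $G$ is locally integrable enters (namely in forcing $R_n\to 0$, which is the real content of the uniqueness claim), and exhibits the commutation $\kappa^{\otimes(n+1)}\otimes\kappa=\kappa\otimes\kappa^{\otimes(n+1)}$ needed in the induction — a point worth one explicit line but posing no difficulty given the boundedness of the kernels on triangles.
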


\begin{proof}
\eqref{eq:Def-V} is equivalent with  
\begin{equation*}
    \Phi\left(F\right)(t) - G(t) = \int\limits_{0}^{t}K(t,s) F(s)\; ds - \int\limits_{0}^{t}\kappa(t,s) G(s)\;ds . 
\end{equation*}
Now inserting \eqref{eq:Def-V} for $F(s)$ in the first integral und using Fubini's theorem now 
implies that 
\begin{equation*}
\begin{aligned} 
\Phi & (F) (t) - G(t) \\ 
& = \int_0^t K(t,s) ( G(s) - \int_0^s \kappa (s,u) G(u)\; du )\, ds  
 - \int_0^t\kappa (t,s) G(s)\; ds \\ 
& = \int_0^t ( K(t,s) - \kappa (t,s) ) G(s) \, ds  
 - \int_0^t \int_u^t K(t,s)\kappa (s,u) \; ds \; G(u)\; du  \\ 
& = 0 \, ,  
\end{aligned} 
\end{equation*}
since 
$\int_u^t K(t,s) \kappa (s,u) \, ds = K(t,u) - \kappa (t,u)$. 
\end{proof}

The following Lipschitz continuity of $K(t,s)$ w.r.t. $t$ is also needed.   

\begin{proposition}
\label{prop3}
Assume that the pair of parameters $(f,h)$ satisfies (A) and let $T > 0$. Then there exists a 
constant $C_T$ such that 
\begin{equation}
    \sup\limits_{s\in\lbrack0,t_{1}\wedge t_{2}\rbrack}\left|K(t_{1},s)-K(t_{2},s)\right| 
     \le C_T \left| t_{1}-t_{2} \right|\;,\quad 0\le t_1, t_2 \le T.
\end{equation} 
\end{proposition}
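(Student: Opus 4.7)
The plan is to exploit the resolvent identity
\begin{equation*}
K(t,s) = \kappa(t,s) + \int_s^t \kappa(t,u) K(u,s)\,du,
\end{equation*}
which follows directly from the definition $K = \sum_{n\ge 1}\kappa^{\otimes n}$ and the recursion \eqref{eq:Def-kappa_otimes}, combined with a preliminary Lipschitz estimate on the base kernel $\kappa$. The key observation is that assumption (A) gives enough regularity on the input data to push a pointwise Lipschitz bound through the resolvent equation by a single Gronwall-free calculation.

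First I would establish that $\kappa(\cdot,s)$ is Lipschitz in its first argument, uniformly in $s$. Indeed, for $0\le s\le t_1\le t_2\le T$, the defining formula \eqref{eq:Def-kappa} gives
\begin{equation*}
\kappa(t_2,s) - \kappa(t_1,s) = \int_{t_1}^{t_2} f'(x_u)\,h'(u-s)\,du,
\end{equation*}
so that, using $f'$ bounded by (A.1) and $h'$ continuous hence bounded on $[0,T]$ by (A.2),
\begin{equation*}
|\kappa(t_2,s)-\kappa(t_1,s)|\le \|f'\|_\infty\,\|h'\|_{\infty,[0,T]}\,|t_2-t_1|=:L_T|t_2-t_1|.
\end{equation*}

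Next I would plug the resolvent identity into the difference $K(t_2,s)-K(t_1,s)$ for $s\le t_1\le t_2\le T$ and split the result into three pieces:
\begin{equation*}
K(t_2,s)-K(t_1,s) = \bigl[\kappa(t_2,s)-\kappa(t_1,s)\bigr] + \int_s^{t_1}\!\bigl[\kappa(t_2,u)-\kappa(t_1,u)\bigr]K(u,s)\,du + \int_{t_1}^{t_2}\!\kappa(t_2,u)\,K(u,s)\,du.
\end{equation*}
The first term is handled by the Lipschitz bound above. For the second, I use the same Lipschitz bound on $\kappa$ together with the pointwise bound $|K(u,s)|\le M_T e^{M_T}$ already recorded in the text, yielding an estimate of order $L_T\,T\,M_T e^{M_T}|t_2-t_1|$. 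For the third, I combine $|\kappa(t_2,u)|\le M_T$ from \eqref{eq:upper_bound_kappa} with the same bound on $K$, producing a term of order $M_T^2 e^{M_T}|t_2-t_1|$. Adding everything gives
\begin{equation*}
\sup_{s\in[0,t_1]}|K(t_2,s)-K(t_1,s)|\le\bigl(L_T + L_T T M_T e^{M_T} + M_T^2 e^{M_T}\bigr)|t_2-t_1|,
\end{equation*}
so $C_T := L_T(1+T M_T e^{M_T}) + M_T^2 e^{M_T}$ works.

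There is no genuine obstacle in this argument; the only mildly delicate point is noticing that assumption (A.2), namely $h\in C^1(\mathbb{R}_+,\mathbb{R})$, is used in a stronger way here than for the mere $L^1$-bound on $h'$: it gives the sup-norm bound on $h'$ over compacts needed to obtain Lipschitz (rather than merely absolutely continuous) dependence of $\kappa$ on $t$. Once that is recognised, everything else is a one-line application of the resolvent identity and the pointwise bound on $K$ that was already derived in the excerpt.
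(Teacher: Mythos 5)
Your argument is essentially identical to the paper's own proof: the same resolvent identity $K(t,s)=\kappa(t,s)+\int_s^t\kappa(t,u)K(u,s)\,du$, the same Lipschitz estimate on $\kappa(\cdot,s)$, the same three-way splitting of the difference, and the same use of the bounds $|\kappa|\le M_T$ and $|K|\le M_Te^{M_T}$. Your Lipschitz constant $L_T=\|f'\|_\infty\|h'\|_{\infty,[0,T]}$ for $\kappa$ is in fact the correct one (the paper's display \eqref{eq:Lipschitz_kappa} writes $\|f\|_\infty\|h'\|_{L^1[0,T]}$, which does not follow from the integral over $[t_1,t_2]$), so your remark about needing the sup-norm of $h'$ on compacts is well taken.
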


\begin{proof}
First note that for all $0\le s\le t$ the resolvent kernel satisfies the following integral equation  
\begin{align*}
K(t,s) = \kappa(t,s)+\sum\limits_{n=1}^{\infty} 
\kappa^{\otimes n+1}(t,s)  
=  \kappa(t,s)  + \int\limits_{s}^{t}\kappa(t,u)K(u,s)\;du . 
\end{align*}
We obtain for $s\le t_1\le t_2 \le T$ that 
\begin{equation} 
\label{eq:resolvent_lipschitz} 
\begin{aligned} 
& \left| K(t_{1},s) -K(t_{2},s)\right| 
 \le \left|\kappa(t_{1},s)-\kappa(t_{2},s)\right| \\ 
& \qquad\qquad 
+\left|\int_s^{t_{1}}\kappa(t_{1},u)K(u,s)\;du 
-\int_s^{t_{2}}\kappa(t_{2},u)K(u,s)\;du\right| \, .  
\end{aligned}
\end{equation} 
The integral kernel $\kappa (t,s)$ is Lipschitz continuous w.r.t. $t$ with 
Lipschitz-constant $M_{t}$, since 
\begin{equation} 
\label{eq:Lipschitz_kappa} 
\begin{aligned} 
&\left|\kappa(t_{1},s)-\kappa(t_{2},s)\right| 
= \left|\int\limits_{s}^{t_{1}}f'\left(x_{u}\right)h'(u-s)\;du-\int\limits_{s}^{t_{2}} 
   f'\left(x_{u}\right)h'(u-s)\;du\right| \\
& \hspace{2.8cm}\le\int\limits_{t_{1}}^{t_{2}}\left|f'\left(x_{u}\right)h'(u-s)\right|\;du 
   \le \|f\|_\infty \|h'\|_{L^1 [0,T]} \left| t_{2}-t_{1} \right|\, . 
\end{aligned} 
\end{equation} 
Using the estimate  
\begin{align*}
 & \left|\int\limits_{s}^{t_{1}}\kappa(t_{1},u)K(u,s)\;du-\int\limits_{s}^{t_{2}}\kappa(t_{2},u)K(u,s)\;du\right|\\
 & \hspace{1.3cm}\leq\int\limits_{s}^{t_{1}}\left|\left(\kappa(t_{1},u)-\kappa(t_{2},u)\right)K(u,s)\right|\;du+\int\limits_{t_{1}}^{t_{2}}\left|\kappa(t_{2},u)K(u,s)\right|\;du\\
 & \hspace{1.3cm}\le  M_T e^{M_T} \left( \int\limits_{s}^{t_{1}}\left|\kappa(t_{1},u)-\kappa(t_{2},u)\right|\;du+\int\limits_{t_{1}}^{t_{2}}\left|\kappa(t_{2},u)\right|\;du\right)\\
 & \hspace{1.3cm}\le M_T e^{M_T} \left(  \|f\|_\infty \|h'\|_{L^1 [0,T]}\left|t_{2}-t_{1}\right|  + 
     M_T \left|t_{2}-t_{1}\right|\right) ,  
\end{align*}
we now obtain the assertion.  
\end{proof}
\end{document}